\documentclass[a4paper, 10pt]{article}

\usepackage[latin1]{inputenc}
\usepackage{subfigure}
\usepackage{amsmath}
\usepackage{amssymb}
\usepackage{amsthm}
\usepackage{amsfonts}
\usepackage[cyr]{aeguill}
\usepackage{xspace}
\usepackage[english]{babel}
\usepackage{graphicx}
\usepackage{float}
\usepackage{mathtools}
\usepackage{psfrag,floatflt,hyperref}
\usepackage{verbatim}
\usepackage{geometry}
\usepackage{multirow}
\usepackage[retainorgcmds]{IEEEtrantools}
\usepackage{stmaryrd}
\SetSymbolFont{stmry}{bold}{U}{stmry}{m}{n}
\usepackage{color}
\newtheorem{theorem}{Theorem}[section]
\newtheorem{lemma}[theorem]{Lemma}

\newenvironment{remark}[1][Remark.]{\begin{trivlist}
\item[\hskip \labelsep {\bfseries #1}]}{\end{trivlist}}
\newenvironment{example}[1][Example.]{\begin{trivlist}
\item[\hskip \labelsep {\bfseries #1}]}{\end{trivlist}}

\title{Asymptotics for the determinant of the combinatorial Laplacian on hypercubic lattices}
\author{Justine Louis}
\date{$30$th July $2015$}

\newcommand*{\defeq}{\mathrel{\vcenter{\baselineskip0.5ex \lineskiplimit0pt
                     \hbox{\scriptsize.}\hbox{\scriptsize.}}}=}

\begin{document}
        \maketitle

\begin{abstract}
In this paper, we compute asymptotics for the determinant of the combinatorial Laplacian on a sequence of $d$-dimensional orthotope square lattices as the number of vertices in each dimension grows at the same rate. It is related to the number of spanning trees by the well-known matrix tree theorem. Asymptotics for $2$ and $3$ component rooted spanning forests in these graphs are also derived. Moreover, we express the number of spanning trees in a $2$-dimensional square lattice in terms of the one in a $2$-dimensional discrete torus and also in the quartered Aztec diamond. As a consequence, we find an asymptotic expansion of the number of spanning trees in a subgraph of $\mathbb{Z}^2$ with a triangular boundary.
\end{abstract}
\section{Introduction}
In this paper we study the asymptotic behaviour of the number of spanning trees in a discrete $d$-dimensional orthotope square lattice and in the quartered Aztec diamond. Let $L(n_1,\ldots,n_d)$ denote the $d$-dimensional orthotope square lattice defined by the cartesian product of the $d$ path graphs $P_{n_i}$, $i=1,\ldots,d$, where $n_i$, $i=1,\ldots,d$, are positive non-zero integers. We set $n_i=\alpha_in$, $i=1,\ldots,d$ and write indifferently $n_i$ or $\alpha_in$ throughout the paper. By rescaling the distance between two vertices on the lattice $L(n_1,\ldots,n_d)$ with a factor of $1/n$, the limiting object as $n$ goes to infinity is a $d$-dimensional orthotope of size $\alpha_1\times\cdots\times\alpha_d$, that we denote by $K_d$: 
\begin{equation*}
K_d\defeq\left[0,\alpha_1\right[\times\cdots\times\left[0,\alpha_d\right[.
\end{equation*}
The volume of $K_d$ is
\begin{equation*}
V_d^d\defeq\prod_{i=1}^d\alpha_i.
\end{equation*}
Let $m\in\{1,\ldots,d-1\}$. An $m$-dimensional face of $\overline{K}_d$ is defined by
\begin{align*}
\{(x_1,\ldots,x_d)\in\overline{K}_d\vert\ \exists\{i_q\}_{q=1}^d\subset\{1,\ldots,d\}\textnormal{ such that }&x_{i_q}\in\left[0,\alpha_{i_q}\right],\ q=1,\ldots,m\\
\textnormal{ and }&x_{i_q}\in\{0,\alpha_{i_q}\},\ q=m+1,\ldots,d\}.
\end{align*}
The volume of the sum of all the $m$-dimensional faces of $\overline{K}_d$ is given by
\begin{equation*}
V_m^d\defeq2^{d-m}\sum_{1\leqslant i_1<\cdots<i_m\leqslant d}\prod_{q=1}^m\alpha_{i_q}.
\end{equation*}
For example, $V^d_1$ is the perimeter and $V^d_2$ the area of $K_d$.\\
Asymptotics for the determinant of the combinatorial Laplacian on graphs have been widely studied, see for example \cite{chinta2010zeta,chinta2011complexity,MR952941,MR1819995,louis2015asymptotics}. It is related to the number of spanning trees of a graph $G$, denoted by $\tau(G)$, through the matrix tree theorem due to Kirchhoff (see \cite{MR1271140})
\begin{equation*}
\tau(G)=\frac{1}{\lvert V(G)\rvert}\textnormal{det}^\ast\Delta_G
\end{equation*}
where $\textnormal{det}^\ast\Delta_G$ is the product of the non-zero eigenvalues of the Laplacian on $G$ and $\lvert V(G)\rvert$ the number of vertices in $G$. In \cite{chinta2010zeta}, the authors developed a technique to compute the asymptotic behaviour of spectral determinants of the combinatorial Laplacian associated to a sequence of discrete tori. The technique consists in studying the asymptotic behaviour of the associated theta function which contains the spectral information of the graph. Consider a graph $G$ with vertex set $V(G)$. For a function $f$ defined on $V(G)$, the combinatorial Laplacian is defined by
\begin{equation*}
\Delta_Gf(x)=\sum_{y\sim x}(f(x)-f(y))
\end{equation*}
where the sum is over all vertices adjacent to $x$. Let $\{\lambda_k\}_k$ denote the spectrum of the Laplacian on $G$. The associated theta function is defined by
\begin{equation*}
\theta_G(t)=\sum_{k\in V(G)}e^{-\lambda_kt}.
\end{equation*}
To compute the asymptotic behaviour of spectral determinants on a sequence of $d$-orthotope square lattices, we express the associated theta function in terms of the theta function associated to the discrete torus with twice vertices at each side of the torus. This can be done because of the similarity of their spectrum. We then use the asymptotic results from \cite{chinta2010zeta}. The formula obtained relates the determinant of the Laplacian on the discrete lattice $L(n_1,\ldots,n_d)$ to the regularized determinant of the Laplacian on the rescaled limiting object, which is the real $d$-dimensional orthotope $K_d$, and to the ones on the $m$-dimensional boundary faces of $K_d$, $m=1,\ldots,d-1$. Moreover, we compute asymptotic results for the number of rooted spanning forests with $2$ and $3$ components.\\
We will prove the following theorem.
\begin{theorem}
\label{ThO}
Given positive integers $\alpha_i$, $i=1,\ldots,d$, let $\textnormal{det}^\ast\Delta_{L(\alpha_1n,\ldots,\alpha_dn)}$ be the product of the non-zero eigenvalues of the Laplacian on the $d$-dimensional orthotope square lattice $L(\alpha_1n,\ldots,\alpha_dn)$. Then as $n\rightarrow\infty$
\begin{align*}
\log\textnormal{det}^\ast\Delta_{L(\alpha_1n,\ldots,\alpha_dn)}&=c_dV^d_dn^d-\sum_{m=1}^{d-1}\frac{1}{4^{d-m}}\left(\int_0^\infty(1-e^{-4t})^{d-m}e^{-2mt}I_0(2t)^m\frac{dt}{t}\right)V^d_mn^m\\
&\ \ \ +(2-2^{1-d})\log{n}+\sum_{m=1}^d\sum_{1\leqslant i_1<\cdots<i_m\leqslant d}\log\textnormal{det}^\ast\Delta_{\alpha_{i_1}\times\cdots\times\alpha_{i_m}}\\
&\ \ \ +\frac{1}{2^d}\sum_{m=1}^dC^m_d(-1)^m\log(4m)+o(1)
\end{align*}
where $\textnormal{det}^\ast\Delta_{\alpha_{i_1}\times\cdots\times\alpha_{i_m}}$ is the regularized determinant of the Laplacian on the $m$-orthotope $\alpha_{i_1}\times\cdots\times\alpha_{i_m}$ with Dirichlet boundary conditions. The constant $c_d$ is
\begin{equation*}
c_d=\int_0^\infty(e^{-t}-e^{-2dt}I_0(2t)^d)\frac{dt}{t}
\end{equation*}
where $I_0$ is the modified $I$-Bessel function of order zero.
\end{theorem}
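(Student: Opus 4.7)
The plan is to reduce the asymptotics on the orthotope lattice to those on discrete tori of various dimensions, which are already established in \cite{chinta2010zeta}. The key observation is that the combinatorial Laplacian spectrum $\{2-2\cos(k\pi/n_i)\}_{k=0}^{n_i-1}$ of the path $P_{n_i}$ coincides, under the symmetry $\cos(k\pi/n_i)=\cos((2n_i-k)\pi/n_i)$, with the ``first half'' of the spectrum $\{2-2\cos(k\pi/n_i)\}_{k=0}^{2n_i-1}$ of the discrete cycle $T_{2n_i}$. Writing $g_i(t)$ and $f_i(t)$ for the one-dimensional theta functions of $P_{n_i}$ and $T_{2n_i}$ respectively, this symmetry gives $f_i(t)=2g_i(t)-1+e^{-4t}$, and taking the product over $i=1,\ldots,d$ yields the basic identity
\begin{equation*}
\theta_L(t)=\frac{1}{2^d}\sum_{S\subseteq\{1,\ldots,d\}}(1-e^{-4t})^{d-|S|}\theta_{T_S}(t),
\end{equation*}
where $T_S=\prod_{i\in S}T_{2n_i}$ is the discrete torus indexed by $S$ and $\theta_{T_\emptyset}\equiv 1$.

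I would then substitute this identity into the Frullani-type representation
\begin{equation*}
\log\textnormal{det}^\ast\Delta_L=\int_0^\infty\bigl[(N_L-1)e^{-t}-(\theta_L(t)-1)\bigr]\frac{dt}{t}
\end{equation*}
and regroup the integrand so that each subset $S$ with $|S|=m$ contributes a weighted integral of $(1-e^{-4t})^{d-m}(\theta_{T_S}(t)-1)/t$ together with a suitable share of the counterterm. For $m<d$ the factor $(1-e^{-4t})^{d-m}\sim(4t)^{d-m}$ regularizes the short-time singularity automatically, while for $m=d$ the counterterm is genuinely needed and reproduces the classical Frullani expression for $\log\textnormal{det}^\ast\Delta_{T_{\{1,\ldots,d\}}}$.

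Applying the asymptotic expansion of \cite{chinta2010zeta} to each $m$-dimensional torus $T_S$ and using the leading behaviour $\theta_{T_S}(t)\sim N_{T_S}e^{-2mt}I_0(2t)^m$ in the weighted integral produces, after summation over $|S|=m$ and use of $V_m^d=2^{d-m}\sum_{|S|=m}\prod_{i\in S}\alpha_i$, the boundary coefficient $\frac{1}{4^{d-m}}\int_0^\infty(1-e^{-4t})^{d-m}e^{-2mt}I_0(2t)^m\frac{dt}{t}\,V_m^dn^m$ stated in the theorem; the full-dimensional term $|S|=d$ delivers the leading $c_dV_d^dn^d$. Each continuum torus regularized determinant on $\mathbb{R}/2\alpha_{i_1}\mathbb{Z}\times\cdots\times\mathbb{R}/2\alpha_{i_m}\mathbb{Z}$ that appears in the expansion from \cite{chinta2010zeta} decomposes, via the spectral splitting according to which coordinates vanish, as $\sum_{\emptyset\neq U\subseteq S}2^{|U|}\log\textnormal{det}\Delta^{\textnormal{Dir}}_{\prod_{i\in U}[0,\alpha_i]}$, and an inclusion-exclusion across $S$ collapses these contributions into the single sum $\sum_{m=1}^d\sum_{1\leqslant i_1<\cdots<i_m\leqslant d}\log\textnormal{det}^\ast\Delta_{\alpha_{i_1}\times\cdots\times\alpha_{i_m}}$ appearing in the theorem.

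The main obstacle is the precise bookkeeping of the logarithmic and constant contributions. The coefficient $(2-2^{1-d})\log n$ must be reconstructed by combining the $\log N_{T_S}$ terms arising from \cite{chinta2010zeta} across all subsets $S$, weighted by the $(1-e^{-4t})^{d-|S|}$ factors and interacting with the zero eigenvalue of each $T_S$; while the constant $\frac{1}{2^d}\sum_{m=1}^dC^m_d(-1)^m\log(4m)$ emerges from the boundary integrals $\int_0^\infty(1-e^{-4t})^{d-m}e^{-t}\frac{dt}{t}$ produced when the Frullani counterterm $(N_L-1)e^{-t}/t$ is distributed among the subset contributions. Aligning all these cancellations across the $2^d$ subsets $S$, and simultaneously matching the continuum torus regularization to the Dirichlet orthotope one, is the most delicate step of the argument.
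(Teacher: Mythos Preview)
Your strategy is essentially the paper's: derive the theta identity
\[
\theta_{L}(t)=\frac{1}{2^{d}}\sum_{S\subseteq\{1,\dots,d\}}(1-e^{-4t})^{\,d-|S|}\,\theta_{T_S}(t),
\]
feed each summand into the torus asymptotics of \cite{chinta2010zeta}, and then rewrite the continuum torus determinants in terms of Dirichlet orthotope determinants via inclusion--exclusion. Your one--line derivation of the theta identity from the factorisation $g_i(t)=\tfrac12\bigl(f_i(t)+1-e^{-4t}\bigr)$ is in fact slicker than the paper's route, which first introduces a ``theta--star'' on the open faces, relates it to $\theta_{T}$, and then inverts via an abstract inversion lemma (Lemma~\ref{inversion}); both arrive at exactly the same formula, but yours bypasses the lemma entirely.

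The one substantive methodological difference is how the spectral information is extracted from the theta identity. You work directly with the Frullani representation
\[
\log\textnormal{det}^\ast\Delta_L=\int_0^\infty\bigl[(N_L-1)e^{-t}-(\theta_L(t)-1)\bigr]\frac{dt}{t}
\]
and then have to ``distribute'' the counterterm among the $2^d$ subsets. The paper instead takes the Gauss transform (multiply by $2se^{-s^2t}$, integrate in $t$, then integrate in $s$), which produces the closed form
\[
\sum_{j\neq0}\log(s^2+\lambda_j^L)=\sum_{m=1}^dV_m^dn^m\mathcal{I}^d_m(s)+\sum_{m=1}^d\mathcal{H}^d_{m,n}(s)+\sum_{k=1}^dC^k_d(-2)^{-k}\log(s^2+4k),
\]
with the integration constant fixed by the elementary $s\to\infty$ behaviour of both sides. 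Setting $s=0$ then gives the constant $\sum_k C^k_d(-2)^{-k}\log(4k)$ immediately, and a second contribution $-2^{-d}\sum_{k\ge1}C^k_d(2^{d-k}-1)(-1)^k\log(4k)$ drops out of the $e^{-4kt}$ pieces in $\mathcal{H}^d_{m,n}(0)$; their sum is the stated $2^{-d}\sum_m C^m_d(-1)^m\log(4m)$. Your description of where this constant comes from (``boundary integrals $\int_0^\infty(1-e^{-4t})^{d-m}e^{-t}\,dt/t$'') is not quite right: those integrals are individually finite but do not by themselves yield the $\log(4m)$ structure, which really arises from the interplay between the $(1-e^{-4t})^{d-m}$ weights and the \emph{zero eigenvalue} of each lower--dimensional torus. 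In the Frullani framework this means you must, for each $S$, subtract and add back the term $1$ coming from the zero mode of $T_S$, and the resulting $\int_0^\infty(1-e^{-4t})^{d-m}(1-e^{-t})\,dt/t$--type pieces, once expanded binomially, are what produce the logarithms. This is doable, but the $s$--parameter device in the paper makes the bookkeeping of both the $(2-2^{1-d})\log n$ coefficient and the constant term mechanical rather than delicate.

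In short: your plan is correct and very close to the paper's, with a cleaner derivation of the theta identity but a less structured treatment of the constant terms; the paper's Gauss--transform packaging is what turns the step you call ``the most delicate'' into a routine computation.
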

Notice that the limiting object $K_d$ can be decomposed in a disjoint union of $m$ orthotopes, $m\in\{0,1,\ldots,d\}$. More precisely, let $\alpha_{i_1}\times\cdots\times\alpha_{i_m}$, $\{i_q\}_{q=1}^m\subset\{1,\ldots,d\}$, denote the $m$-dimensional orthotope of side lengths $\alpha_{i_1},\ldots,\alpha_{i_m}$ which is open in $\mathbb{R}^m$. Then
\begin{equation*}
K_d=\{0\}\sqcup\bigsqcup_{m=1}^d\bigsqcup_{1\leqslant i_1<\cdots<i_m\leqslant d}\alpha_{i_1}\times\cdots\times\alpha_{i_m}.
\end{equation*}
This decomposition is reflected in the theorem by the appearance of the sum over this decomposition of the logarithm of the regularized determinant of the Laplacian on the $m$-dimensional faces, $m=1,\ldots,d$.\\
By expressing the eigenvalues of the Laplacian on the square lattice $L(n_1,n_2)$ in terms of the one on the two-dimensional discrete torus $\mathbb{Z}^2/\textnormal{diag}(2n_1,2n_2)\mathbb{Z}^2$, we derive a relation, which is stated below, between the number of spanning trees on these two lattices.
\begin{theorem}
Given positive integers $n_1,n_2$, let $\tau(L(n_1,n_2))$ denote the number of spanning trees on the rectangular square lattice $L(n_1,n_2)$ and $\tau(T(2n_1,2n_2))$ the number of spanning trees on the discrete torus $\mathbb{Z}^2/\textnormal{diag}(2n_1,2n_2)\mathbb{Z}^2$. We have
\begin{equation*}
\tau(L(n_1,n_2))=\frac{2^{5/4}\tau(T(2n_1,2n_2))^{1/4}}{(n_1n_2)^{1/4}((3+2\sqrt{2})^{n_1}-(3-2\sqrt{2})^{n_1})^{1/2}((3+2\sqrt{2})^{n_2}-(3-2\sqrt{2})^{n_2})^{1/2}}.
\end{equation*}
\end{theorem}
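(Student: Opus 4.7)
The plan is to compare the spectra of the two Laplacians vertex-by-vertex, collect the resulting product formula, and evaluate the ``extra'' boundary factors in closed form.

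\textbf{Step 1: spectra.} The eigenvalues of $\Delta_{L(n_1,n_2)}$ are $\lambda_{j,k}=4\sin^2(j\pi/(2n_1))+4\sin^2(k\pi/(2n_2))$ with $0\le j\le n_1-1$, $0\le k\le n_2-1$, while those of $\Delta_{T(2n_1,2n_2)}$ are the same expressions but with $0\le j\le 2n_1-1$, $0\le k\le 2n_2-1$. Using the symmetry $\sin^2((2n-k)\pi/(2n))=\sin^2(k\pi/(2n))$, every eigenvalue $\lambda_{j,k}$ in the ``bulk'' $1\le j\le n_1-1$, $1\le k\le n_2-1$ occurs with multiplicity $4$ in the torus spectrum; those on the axes $j\in\{0,n_1\}$ or $k\in\{0,n_2\}$ (but not corners) occur with multiplicity $2$; and the four corner eigenvalues $\lambda_{0,0},\lambda_{0,n_2},\lambda_{n_1,0},\lambda_{n_1,n_2}$ occur once.

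\textbf{Step 2: factorisation.} Separating out the zero eigenvalue $\lambda_{0,0}$ and regrouping, I would write
\[
\mathrm{det}^\ast\Delta_{T(2n_1,2n_2)}=\lambda_{0,n_2}\,\lambda_{n_1,0}\,\lambda_{n_1,n_2}\cdot(R_1R_2)^2(S_1S_2)^2\,P^4,
\]
where $P=\prod_{j=1}^{n_1-1}\prod_{k=1}^{n_2-1}\lambda_{j,k}$, $R_i=\prod_{j=1}^{n_i-1}4\sin^2(j\pi/(2n_i))$, and $S_i=\prod_{j=1}^{n_i-1}(4+4\sin^2(j\pi/(2n_i)))$. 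The same bookkeeping gives $\mathrm{det}^\ast\Delta_{L(n_1,n_2)}=P\cdot R_1R_2$, which lets me solve for $P$ and obtain a clean identity between $\mathrm{det}^\ast\Delta_L$ and $\mathrm{det}^\ast\Delta_T$.

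\textbf{Step 3: closed forms for $R_i$ and $S_i$.} Using the classical identity $\prod_{j=1}^{n-1}2\sin(j\pi/(2n))=\sqrt{n}$ (which follows from $\prod_{j=1}^{N-1}2\sin(j\pi/N)=N$ applied with $N=2n$ and the reflection $\sin((2n-j)\pi/(2n))=\sin(j\pi/(2n))$), I get $R_i=n_i$. For $S_i$, writing $4+4\sin^2\theta=6-2\cos 2\theta$ reduces the product to $2^{n_i-1}\prod_{j=1}^{n_i-1}(3-\cos(j\pi/n_i))$, which is $U_{n_i-1}(3)$ up to the Chebyshev normalisation. Since $\cosh\alpha=3$ gives $e^\alpha=3+2\sqrt{2}$, the identity $U_{n-1}(\cosh\alpha)=\sinh(n\alpha)/\sinh\alpha$ yields
\[
S_i=\frac{(3+2\sqrt{2})^{n_i}-(3-2\sqrt{2})^{n_i}}{4\sqrt{2}}.
\]

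\textbf{Step 4: conversion via matrix tree theorem.} Using $\mathrm{det}^\ast\Delta_{L}=n_1n_2\,\tau(L)$ and $\mathrm{det}^\ast\Delta_T=4n_1n_2\,\tau(T)$, I substitute Steps 2--3 into the relation $\mathrm{det}^\ast\Delta_L=(R_1R_2)^{1/2}(\mathrm{det}^\ast\Delta_T)^{1/4}/(128^{1/4}(S_1S_2)^{1/2})$ and track the powers of $2$ (the numerical constant $2^{5/4}$ emerges from combining $4^{1/4}\cdot(4\sqrt{2})/2^{5/4}\cdot 128^{-1/4}$). This produces exactly the stated formula.

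The main obstacle is not conceptual but accounting: keeping the multiplicities in Step~2 correctly paired against the right boundary products, and then herding the various $2$-powers from Steps~3--4 without drifting off by a factor. The only nontrivial input beyond careful algebra is the Chebyshev evaluation in Step~3, and even this is entirely standard.
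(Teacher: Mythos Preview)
Your proof is correct and follows essentially the same route as the paper: both arguments decompose $\mathrm{det}^\ast\Delta_{T(2n_1,2n_2)}$ into the bulk product $P^4$, the edge contributions $(R_1R_2)^2(S_1S_2)^2$, and the corner factor $128$, then compare with $\mathrm{det}^\ast\Delta_{L(n_1,n_2)}=R_1R_2P$ and evaluate $R_i=n_i$ and $S_i=\bigl((3+2\sqrt{2})^{n_i}-(3-2\sqrt{2})^{n_i}\bigr)/(4\sqrt{2})$ via the same Chebyshev identity before invoking the matrix tree theorem. The only cosmetic difference is that the paper writes the eigenvalues as $4-2\cos(\pi k_1/n_1)-2\cos(\pi k_2/n_2)$ and phrases the multiplicity count as a decomposition of the torus index set into four copies of the rectangle, whereas you use the $4\sin^2$ form and count multiplicities directly via the reflection symmetry---these are the same bookkeeping.
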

In \cite{MR1819995}, Kenyon computed asymptotics for spectral determinants on a simply-connected rectilinear region in $\mathbb{R}^2$. Here we compute it in the particular case of a triangular region. More precisely, we consider the quartered Aztec diamond of order $n$, denoted by $QAD_n$, which is the subgraph of $\mathbb{Z}^2$ with nearest neighbours connected induced by the vertices $(k_1,k_2)$ such that $k_1+k_2\leqslant n$ and $k_1,k_2\geqslant0$. Figure \ref{qad7} illustrates $QAD_7$. In \cite{MR2393253}, Ciucu derived a relation between the characteristic polynomials of the rectangular square lattice and of the quartered Aztec diamond using combinatorial arguments. From this one can deduce a relation for the number of spanning trees. In the second part of this work, we present an alternative approach for it. Consequently, we derive the asymptotic behaviour of it, stated in the following theorem, which shows that it is related to the regularized determinant of the Laplacian on the triangle with 
Dirichlet boundary conditions.
\begin{figure}[!ht]
\centering
\includegraphics[width=3.5cm]{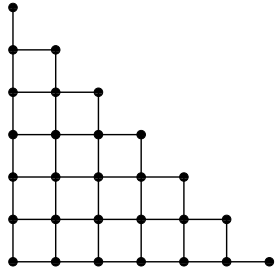}
\caption{Quartered Aztec diamond of order $7$.}
\label{qad7}
\end{figure}
\begin{theorem}
Let $\tau(\textnormal{QAD}_n)$ denote the number of spanning trees in the quartered Aztec diamond of order $n$. Then as $n\rightarrow\infty$
\begin{equation*}
\log(\tau(\textnormal{QAD}_n))=\frac{2G}{\pi}n^2-\log(2+\sqrt{2})n-\frac{3}{4}\log{n}+\log{\textnormal{det}^\ast\Delta_{\Delta}}+\frac{23}{8}\log{2}+o(1)
\end{equation*}
where $G$ is the Catalan constant and $\textnormal{det}^\ast\Delta_{\Delta}$ is the regularized determinant of the Laplacian on the right-angled isoscele unit triangle with Dirichlet boundary conditions.
\end{theorem}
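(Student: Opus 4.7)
The plan is to reduce the quartered Aztec diamond to the rectangular square lattice $L(n,n)$, for which Theorem~\ref{ThO} (specialised to $d=2$) provides the required asymptotic expansion. The first step is to derive an exact identity expressing $\tau(\textnormal{QAD}_n)$ in terms of $\tau(L(n,n))$ (or a closely related rectangular lattice) together with an explicit closed-form product factor. To obtain this identity I will exploit the reflection symmetry of $L(n,n)$ across its anti-diagonal $\{k_1+k_2=n\}$: this reflection commutes with the Laplacian, so the eigenspaces split into symmetric and antisymmetric parts. Antisymmetric eigenfunctions vanish along the diagonal and therefore restrict to Dirichlet-type eigenfunctions on $\textnormal{QAD}_n$, while the symmetric ones correspond to a different, explicitly diagonalisable problem. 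The resulting factorisation of $\det^\ast\Delta_{L(n,n)}$ expresses $\det\Delta_{\textnormal{QAD}_n}$ as a quotient, with the complementary factor evaluable in closed form as a product whose logarithm involves $3\pm2\sqrt{2}$, in direct parallel with the second theorem above. Ciucu's identity from \cite{MR2393253} supplies an alternative combinatorial derivation of essentially the same relation.

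With this identity in hand, I substitute the $d=2$ case of Theorem~\ref{ThO} (with $\alpha_1=\alpha_2=1$) for $\log\det^\ast\Delta_{L(n,n)}$, use the matrix tree theorem to pass to $\tau(L(n,n))$, and invoke the classical identity $c_2=4G/\pi$. The triangle has half the area of the square, so the leading bulk coefficient halves to $2G/\pi$. The linear (perimeter) coefficient is obtained by combining the $V_1^2 n$ contribution from Theorem~\ref{ThO} with the asymptotic $\tfrac{1}{2}\log\bigl((3+2\sqrt{2})^n-(3-2\sqrt{2})^n\bigr)\sim n\log(1+\sqrt{2})$ coming from the reflection factor; the arithmetic identity $2\log(1+\sqrt{2})=\log(3+2\sqrt{2})$ together with some rearrangement then produces the coefficient $-\log(2+\sqrt{2})$. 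The $-\tfrac{3}{4}\log n$ term and the additive constants are assembled similarly from the $\log n$ and constant contributions of Theorem~\ref{ThO}, the $\log n$ and $O(1)$ pieces of the reflection factor, and the explicit powers of $2$ appearing in the exact identity.

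The remaining analytic ingredient is the identification of $\log\det^\ast\Delta_\Delta$, the zeta-regularised Dirichlet determinant on the unit right-isoscele triangle, as the ``antisymmetric part'' of $\log\det^\ast\Delta_{1\times1}$: the Dirichlet spectrum of the triangle is exactly the set of eigenvalues $\pi^2(j^2+k^2)$ with $j>k\geqslant 1$, i.e.\ the subset of Dirichlet eigenvalues on the unit square whose eigenfunctions are antisymmetric under reflection across the diagonal. Zeta-regularisation of this restricted spectrum therefore yields $\det^\ast\Delta_\Delta$ up to a computable normalisation constant which one determines by splitting the spectral zeta function of the square into its symmetric and antisymmetric components. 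I expect the main obstacle to be the careful bookkeeping required to pin down the additive constant $\tfrac{23}{8}\log 2$: this demands tracking the $o(1)$ corrections from the reflection factors, the powers of $2$ appearing throughout the volume and perimeter integrals of Theorem~\ref{ThO}, and the precise renormalisation discrepancy between the Dirichlet zeta functions on the square and on the triangle, all of which must balance to give the stated coefficient exactly.
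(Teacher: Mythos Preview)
Your overall strategy---derive an exact identity relating $\tau(\textnormal{QAD}_n)$ to $\tau(L(n,n))$, then substitute the $d=2$ case of Theorem~\ref{ThO} and split the square's Dirichlet zeta function into its diagonal and off-diagonal parts to isolate $\zeta_\Delta$---is exactly what the paper does. The continuum identification $\zeta_{1\times1}(s)=2\zeta_\Delta(s)+2^{-s}\zeta_1(s)$ you describe is precisely the paper's relation, and the final assembly of constants proceeds as you outline.

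Where your plan diverges is in the derivation of the exact discrete identity, and here there is a genuine gap. The paper does \emph{not} use a physical reflection of $L(n,n)$ across its anti-diagonal. Instead it quotes the known closed form $\tau(\textnormal{QAD}_n)=\prod_{0<k_1<k_2<n}(4-2\cos(\pi k_1/n)-2\cos(\pi k_2/n))$ and then exploits the $k_1\leftrightarrow k_2$ symmetry \emph{in eigenvalue index space} (together with separating out the diagonal $k_1=k_2$ and the axes $k_i=0$) to obtain the clean identity $\tau(\textnormal{QAD}_n)=\tau(L(n,n))^{1/2}/(\sqrt{n}\,2^{(n-1)/2})$. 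Your proposed physical anti-diagonal reflection is problematic: the combinatorial Laplacian on $\textnormal{QAD}_n$ has free (Neumann-type) boundary conditions everywhere, so it is the \emph{symmetric} eigenfunctions, not the antisymmetric ones, that one would expect to restrict; and in any case the number of factors in the product above is $\binom{n-1}{2}$, which does not match the number of nonzero eigenvalues of $\Delta_{\textnormal{QAD}_n}$, so the product is not literally a spectral determinant of the QAD graph obtained by antisymmetrisation. Consequently your anticipated complementary factor involving $3\pm2\sqrt{2}$ is wrong: that expression comes from the unrelated rectangle-to-torus relation. The actual complementary factor is simply $\sqrt{n}\,2^{(n-1)/2}$, contributing $-\tfrac{n}{2}\log 2-\tfrac{1}{2}\log n+\tfrac{1}{2}\log 2$, and the linear coefficient $-\log(2+\sqrt{2})$ arises from $-\log(1+\sqrt{2})-\tfrac{1}{2}\log 2$. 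Falling back on Ciucu's identity, as you mention, would rescue the argument; your reflection heuristic as stated would not.
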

\subsection{Regularized determinant}
Let $M$ be a Riemannian manifold with or without boundary and let $\Delta_M$ be the Laplace-Beltrami operator associated to $M$. If $M$ has a boundary we associate Dirichlet boundary conditions to $\Delta_M$. Denote by $\{\lambda_k\}_k$ the eigenvalues of $\Delta_M$. The spectral zeta function associated to $M$ is defined for $\Re(s)>\dim{M}/2$ by
\begin{equation*}
\zeta_M(s)=\sum_{\lambda_k\neq0}\frac{1}{\lambda_k^s}.
\end{equation*}
It admits a meromorphic continuation to the whole complex plane (see for example \cite{MR3312509} for $M$ with a boundary and \cite{chinta2010zeta} for the case $M$ is a torus). The regularized determinant of $\Delta_M$ can then be defined by
\begin{equation*}
\log\textnormal{det}^\ast\Delta_M=-\zeta'_M(0).
\end{equation*}
\subsection{Preliminary result}
We prove the following lemma which will be useful in the next section to invert relations between the theta functions. Throughout this paper, we set an empty summation to be one by convention.
\begin{lemma}
\label{inversion}
Let $\{i_q\}_{q\geqslant1}$ be an increasing sequence of positive integers. Let $\{n_q\}_{q\geqslant1}$ be a sequence of positive integers and $\{n_{i_q}\}_{q\geqslant1}$ a subsequence of it. Let $f,g:\mathbb{N}^{<\mathbb{N}}\rightarrow\mathbb{R}$ be two sequences of variadic functions such that for all $l\in\mathbb{N}_{\geqslant1}$,
\begin{equation}
\label{fgrel}
f(n_{i_1},\ldots,n_{i_l})=\sum_{k=0}^l\sum_{\substack{a_1<\cdots<a_k\\ \{a_q\}_{q=1}^k\subset\{i_q\}_{q=1}^l}}g(n_{a_1},\ldots,n_{a_k}).
\end{equation}
Then the following inversion formula holds: for all $l\in\mathbb{N}_{\geqslant1}$,
\begin{equation*}
g(n_{i_1},\ldots,n_{i_l})=\sum_{k=0}^l(-1)^{l-k}\sum_{\substack{a_1<\cdots<a_k\\ \{a_q\}_{q=1}^k\subset\{i_q\}_{q=1}^l}}f(n_{a_1},\ldots,n_{a_k}).
\end{equation*}
\end{lemma}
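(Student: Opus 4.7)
The statement is essentially Möbius inversion on the Boolean lattice of subsets of $\{i_1,\ldots,i_l\}$, so that is the structure I would exploit. Concretely, my plan is to index the summations by subsets rather than by increasing tuples. For a finite set $S=\{a_1<\cdots<a_k\}\subset\{i_1,\ldots,i_l\}$, write $F(S)\defeq f(n_{a_1},\ldots,n_{a_k})$ and $G(S)\defeq g(n_{a_1},\ldots,n_{a_k})$, with the convention $F(\emptyset)=f()$ and $G(\emptyset)=g()$ (reading the variadic functions on the empty tuple). Denoting $I_l=\{i_1,\ldots,i_l\}$, the hypothesis \eqref{fgrel} becomes the very clean statement
\begin{equation*}
F(S)=\sum_{T\subseteq S}G(T),\qquad \text{for every }S\subseteq I_l.
\end{equation*}
The claim to be established is the dual identity $G(S)=\sum_{T\subseteq S}(-1)^{|S|-|T|}F(T)$ for $S=I_l$.

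I would then carry out the inversion by direct substitution rather than quoting an external Möbius lemma, since the argument is only a one-line combinatorial identity. Plug the hypothesis into the proposed inverse to get
\begin{equation*}
\sum_{T\subseteq I_l}(-1)^{l-|T|}F(T)=\sum_{T\subseteq I_l}(-1)^{l-|T|}\sum_{S\subseteq T}G(S)=\sum_{S\subseteq I_l}G(S)\sum_{T:S\subseteq T\subseteq I_l}(-1)^{l-|T|}.
\end{equation*}
The inner sum is parametrized by $T\setminus S$, a subset of $I_l\setminus S$ of arbitrary size $j$, so it equals $\sum_{j=0}^{l-|S|}\binom{l-|S|}{j}(-1)^{l-|S|-j}=(1-1)^{l-|S|}$, which vanishes unless $S=I_l$, in which case it is $1$. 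Only the term $S=I_l$ survives and yields $G(I_l)=g(n_{i_1},\ldots,n_{i_l})$, which is exactly the claim.

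I do not anticipate any real obstacle: the only thing one must be careful about is the convention for the empty tuple (which the author explicitly fixes just before the lemma), since the $k=0$ term on both sides of \eqref{fgrel} corresponds to $f()$ and $g()$ respectively, and those values must be treated consistently throughout the substitution. Once the indexing is rewritten in terms of subsets of $I_l$, the proof is a single application of the binomial identity $\sum_{j=0}^{r}\binom{r}{j}(-1)^{j}=\delta_{r,0}$, so no induction on $l$ is required (though one could alternatively run induction on $l$, using \eqref{fgrel} at levels $l$ and $l-1$ to isolate the top-dimensional $g$ term).
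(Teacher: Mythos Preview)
Your proof is correct and follows essentially the same approach as the paper: substitute the hypothesis into the proposed inversion, interchange the order of summation, and collapse the inner sum via the binomial identity $\sum_j(-1)^j\binom{r}{j}=\delta_{r,0}$. The only cosmetic difference is that you index by subsets $S\subseteq T\subseteq I_l$ whereas the paper tracks sizes and multi-indices explicitly, invoking the count $\binom{l-m}{k-m}$ for the number of $k$-sets between a fixed $m$-set and $I_l$; the underlying combinatorics is identical.
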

\begin{proof}
Let $l\in\mathbb{N}_{\geqslant1}$. From relation (\ref{fgrel}) between $f$ and $g$, we have
\begin{align}
\sum_{k=0}^l(-1)^{l-k}&\sum_{\substack{j_1<\cdots<j_k\\ \{j_q\}_{q=1}^k\subset\{i_q\}_{q=1}^l}}f(n_{j_1},\ldots,n_{j_k})\nonumber\\
&=\sum_{k=0}^l(-1)^{l-k}\sum_{\substack{j_1<\cdots<j_k\\ \{j_q\}_{q=1}^k\subset\{i_q\}_{q=1}^l}}\left(\sum_{m=0}^k\sum_{\substack{a_1<\cdots<a_m\\ \{a_q\}_{q=1}^m\subset\{j_q\}_{q=1}^k}}g(n_{a_1},\ldots,n_{a_m})\right)\nonumber\\
&=\sum_{k=1}^l(-1)^{l-k}\sum_{m=1}^k\sum_{\substack{j_1<\cdots<j_k\\ \{j_q\}_{q=1}^k\subset\{i_q\}_{q=1}^l}}\sum_{\substack{a_1<\cdots<a_m\\ \{a_q\}_{q=1}^m\subset\{j_q\}_{q=1}^k}}g(n_{a_1},\ldots,n_{a_m})
\label{doublesum}
\end{align}
where in the second equality we used that
\begin{equation*}
\sum_{k=0}^l(-1)^{l-k}\sum_{\substack{j_1<\cdots<j_k\\ \{j_q\}_{q=1}^k\subset\{i_q\}_{q=1}^l}}1=\sum_{k=0}^l(-1)^{l-k}C^k_l=0
\end{equation*}
from the binomial theorem, where $C^k_l=l!/(k!(l-k)!)$. For $l\geqslant k\geqslant m$, the double summation can be rewritten in one summation as
\begin{equation}
\label{doublesumtoonesum}
\sum_{\substack{j_1<\cdots<j_k\\ \{j_q\}_{q=1}^k\subset\{i_q\}_{q=1}^l}}\sum_{\substack{a_1<\cdots<a_m\\ \{a_q\}_{q=1}^m\subset\{j_q\}_{q=1}^k}}=C^{k-m}_{l-m}\sum_{\substack{j_1<\cdots<j_m\\ \{j_q\}_{q=1}^m\subset\{i_q\}_{q=1}^l}}.
\end{equation}
Thus (\ref{doublesum}) is equal to
\begin{align*}
\sum_{k=1}^l(-1)^{l-k}&\sum_{m=1}^kC^{k-m}_{l-m}\sum_{\substack{j_1<\cdots<j_m\\ \{j_q\}_{q=1}^m\subset\{i_q\}_{q=1}^l}}g(n_{j_1},\ldots,n_{j_m})\\
&=\sum_{m=1}^l\sum_{k=0}^{l-m}(-1)^{l-m-k}C^k_{l-m}\sum_{\substack{j_1<\cdots<j_m\\ \{j_q\}_{q=1}^m\subset\{i_q\}_{q=1}^l}}g(n_{j_1},\ldots,n_{j_m})=g(n_{i_1},\ldots,n_{i_l})
\end{align*}
where the second equality comes from the fact that the only non-zero term in the summation over $m$ is when $m=l$ from the binomial theorem. 
\end{proof}
\begin{remark}
If we set $f(n_{i_1},\ldots,n_{i_l})=f_l$ and $g(n_{i_1},\ldots,n_{i_k})=g_k$ for all $l,k\in\mathbb{N}_{\geqslant1}$, we recover the standard binomial inversion:
\begin{equation*}
f_l=\sum_{k=0}^lC^k_lg_k\textnormal{ if and only if }g_l=\sum_{k=0}^l(-1)^{l-k}C^k_lf_k,\textnormal{ for all }l\in\mathbb{N}_{\geqslant1},
\end{equation*}
with $f_0=g_0=1$.
\end{remark}
\section{\texorpdfstring{Asymptotic number of spanning trees in the \boldmath$d$-orthotope lattice}{Asymptotic number of spanning trees in the d-orthotope lattice}}
\subsection{Theta function}
The eigenvalues of the Laplacian on the square lattice $L(n_1,\ldots,n_d)$ are given by (see \cite{MR952941})
\begin{equation*}
\{\lambda_k^L\}_{k=0,1,\ldots,N-1}=\{2d-2\sum_{i=1}^d\cos(\pi k_i/n_i),\ k_i=0,1,\ldots,n_i-1,\textnormal{ for }i=1,\ldots,d\}
\end{equation*}
where $N=\prod_{i=1}^dn_i$. The $d$-dimensional discrete torus of size $2n_1\times\cdots\times2n_d$ is defined by the quotient $\mathbb{Z}^d/\textnormal{diag}(2n_1,\ldots,2n_d)\mathbb{Z}^d$ with nearest neighbours connected. We denote it by $T(2n_1,\ldots,2n_d)$. The eigenvalues of the Laplacian on $T(2n_1,\ldots,2n_d)$ are given by (see \cite{louis2015formula})
\begin{equation*}
\{\lambda_k^T\}_{k=0,1,\ldots,2^dN-1}=\{2d-2\sum_{i=1}^d\cos(\pi k_i/n_i),\ k_i=0,1,\ldots,2n_i-1,\textnormal{ for }i=1,\ldots,d\}.
\end{equation*}
Notice that $\{\lambda_k^L\}_k\subset\{\lambda_k^T\}_k$.\\
The theta function on the $d$-orthotope lattice $L(n_1,\ldots,n_d)$ is given by
\begin{equation*}
\theta_{L(n_1,\ldots,n_d)}(t)=\sum_{k_1=0}^{n_1-1}\cdots\sum_{k_d=0}^{n_d-1}e^{-(2d-2\sum_{i=1}^d\cos(\pi k_i/n_i))t}
\end{equation*}
and on the discete torus $T(2n_1,\ldots,2n_d)$ by
\begin{equation*}
\theta_{T(2n_1,\ldots,2n_d)}(t)=\sum_{k_1=0}^{2n_1-1}\cdots\sum_{k_d=0}^{2n_d-1}e^{-(2d-2\sum_{i=1}^d\cos(\pi k_i/n_i))t}.
\end{equation*}
Therefore by expressing the theta function on the $d$-orthotope square lattice $L(n_1,\ldots,n_d)$ in terms of the one on the torus $T(2n_1,\ldots,2n_d)$, one can deduce the asymptotic behaviour from the results obtained in \cite{chinta2010zeta}.\\
Let $\llbracket a,b\rrbracket$ denote the set of successive integers $\{a,a+1,\ldots,b\}$. In the theta function on $L(n_1,\ldots,n_d)$, the summation is over the discrete $d$-orthotope $\llbracket0,n_1-1\rrbracket\times\cdots\times\llbracket0,n_d-1\rrbracket$ that we denote by $J_d$, while for the torus $T(2n_1,\ldots,2n_d)$ it is over the discrete $d$-orthotope $\llbracket0,2n_1-1\rrbracket\times\cdots\times\llbracket0,2n_d-1\rrbracket$, denoted by $\widetilde{J}_d$. We decompose $J_d$ as a disjoint union of $l$-dimensional faces, $l=0,1,\ldots,d$. The $0$ dimension is the point $0\in\mathbb{Z}^d$, we call it the root of $J_d$. For $l\in\{1,\ldots,d-1\}$, the $l$-dimensional faces are defined by a subset of $\mathbb{Z}^d$, $(k_1,\ldots,k_d)\subset J_d$, such that $\exists\{i_q\}_{q=1}^d\subset\{1,\ldots,d\}$ such that $k_{i_q}\in\llbracket1,n_{i_q}-1\rrbracket$, $q=1,\ldots,l$ and $k_{i_q}=0$, $q=l+1,\ldots,d$. We call the $d$-dimensional face the interior of $J_d$ where no coordinate is zero, that is $\llbracket1,n_1-
1\rrbracket\times\cdots\times\llbracket1,n_d-1\rrbracket$. For example, in the $2$ dimensional case, $J_2$ decomposes as:
\begin{equation*}
J_2=\{0\}\sqcup(\llbracket1,n_1-1\rrbracket\times\{0\})\sqcup(\{0\}\times\llbracket1,n_2-1\rrbracket)\sqcup(\llbracket1,n_1-1\rrbracket\times\llbracket1,n_2-1\rrbracket).
\end{equation*}
In the theta function of the torus $T(2n_1,\ldots,2n_d)$, the summation is over $2^d$ copies of $J_d$, namely:
\begin{equation*}
\widetilde{J}_d=\bigsqcup_{\substack{\epsilon_i\in\{0,n_i\}\\i=1,\ldots,d}}(J_d+(\epsilon_1,\ldots,\epsilon_d))
\end{equation*}
where the unions are disjoint. Figure \ref{J2} illustrates the decompositions of $J_d$ and $\widetilde{J}_d$ in the case $d=2$.
\begin{figure}[H]
\centering
\subfigure[$J_2$]{\includegraphics[width=2.5cm]{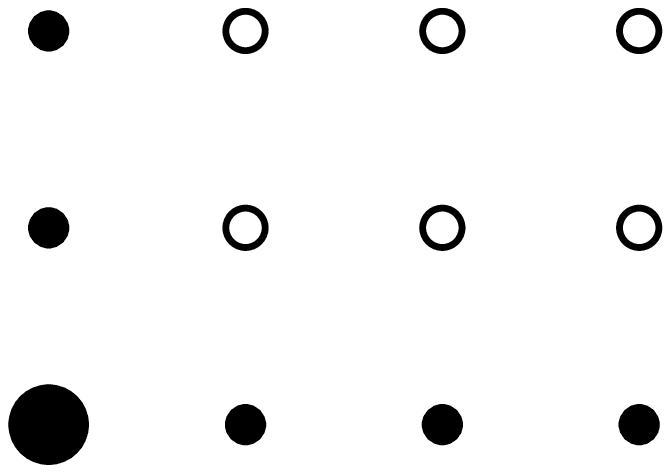}}
\hspace{2cm}
\subfigure[$\widetilde{J}_2$]{\includegraphics[width=5cm]{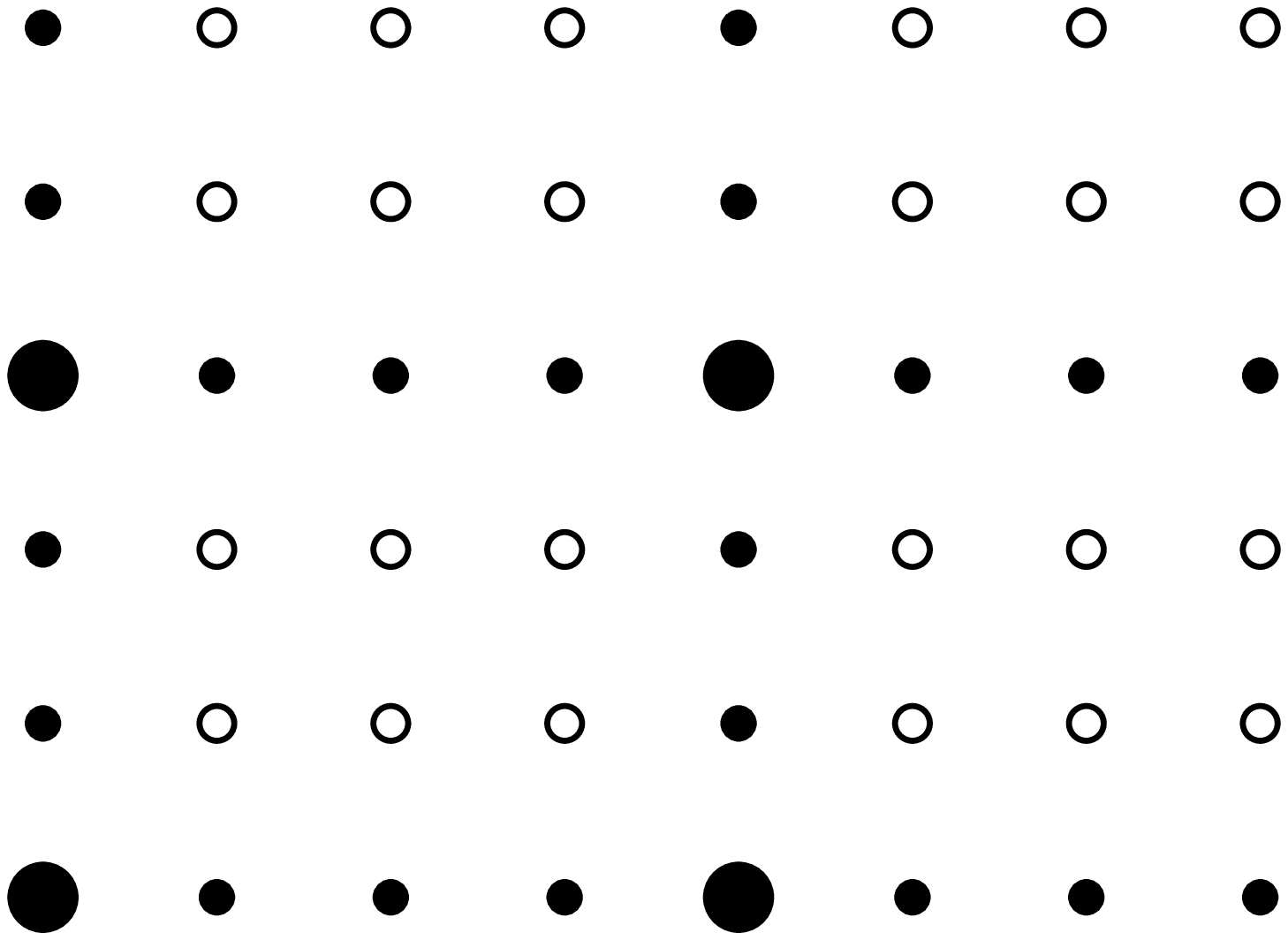}}
\caption{$J_2$ and $\widetilde{J}_2$ with $n_1=4$ and $n_2=3$. The big black dot is the root of $J_2$, the small black dots are the two $1$-dimensional faces and the white dots are the interior of $J_2$.}
\label{J2}
\end{figure}
Let define the theta-star function on the lattice $L(n_{i_1},\ldots,n_{i_l})$, with $1\leqslant i_1<\cdots<i_l\leqslant d$, by the following expression
\begin{equation*}
\theta_{L(n_{i_1},\ldots,n_{i_l})}^\ast(t)=\sum_{k_{i_1}=1}^{n_{i_1}-1}\cdots\sum_{k_{i_l}=1}^{n_{i_l}-1}e^{-(2l-2\sum_{i=i_1}^{i_l}\cos(\pi k_i/n_i))t}
\end{equation*}
where the summation is over the interior of the $l$-orthotope of size $n_{i_1}\times\cdots\times n_{i_l}$, that is, the $k_i$ start at $1$ instead of $0$ for all $i\in\{i_1,\ldots,i_l\}$. Therefore the theta function is related to the theta-star function by the relation
\begin{equation}
\label{thetaLL*}
\theta_{L(n_1,\ldots,n_d)}(t)=\sum_{l=0}^d\sum_{1\leqslant i_1<\cdots<i_l\leqslant d}\theta_{L(n_{i_1},\ldots,n_{i_l})}^\ast(t).
\end{equation}
To evaluate the theta function on $T(2n_1,\ldots,2n_d)$ we sum over the roots of the $2^d$ discrete orthotopes $J_d$, the $l$-dimensional boundary faces of $J_d$ and the interior of $J_d$. Summing over the roots of the $J_d$'s means that for all $i=1,\ldots,d$, $k_i$ is either $0$ either $n_i$. There are $C^j_d=d!/(j!(d-j)!)$ number of ways to take $j$ of the $k_i$'s to be zero. In this case the corresponding exponential term of the theta function is $e^{-4(d-j)t}$. The term $l=0$ is then the sum of all the possibilities:
\begin{equation*}
\sum_{j=0}^dC^j_de^{-4(d-j)t}=(1+e^{-4t})^d.
\end{equation*}
Then for each $l$-dimensional boundary face of $J_d$, where $l\in\{1,\ldots,d-1\}$, there are $d-l$ of the $k_i$'s which are either $0$ either $n_i$, this positions the $l$-dimensional face in $\widetilde{J}_d$. For this $l$, there are $C^j_{d-l}$ number of ways where $j$ of the $k_i$'s are zero and the exponential term is then $e^{-4(d-l-j)t}$. And we sum over the interior of the $l$-dimensional face $L(n_{i_1},\ldots,n_{i_l})$, where $1\leqslant i_1<\cdots<i_l\leqslant d$, which is by definition the theta-star function $\theta_{L(n_{i_1},\ldots,n_{i_l})}^\ast(t)$ with a factor of $2^l$ since this configuration appears $2^l$ times. So for $l\in\{1,\ldots,d-1\}$, we have
\begin{equation*}
\sum_{j=0}^{d-l}C_{d-l}^je^{-4(d-l-j)t}\sum_{1\leqslant i_1<\cdots<i_l\leqslant d}2^l\theta_{L(n_{i_1},\ldots,n_{i_l})}^\ast(t).
\end{equation*}
Finally we sum over the interior of $J_d$, that is, when all the $k_i$'s are greater or equal to one, which appears $2^d$ times. This gives the $l=d$ term
\begin{equation*}
2^d\theta_{L(n_1,\ldots,n_d)}^\ast(t).
\end{equation*}
The theta function on $T(2n_1,\ldots,2n_d)$ is then the sum over the $l$-dimensional faces of the $2^d$ orthotopes $J_d$:
\begin{equation*}
\theta_{T(2n_1,\ldots,2n_d)}(t)=2^d\sum_{l=0}^d\left(\frac{1+e^{-4t}}{2}\right)^{d-l}\sum_{1\leqslant i_1<\cdots<i_l\leqslant d}\theta_{L(n_{i_1},\ldots,n_{i_l})}^\ast(t)
\end{equation*}
which is equivalent to
\begin{equation*}
(1+e^{-4t})^{-d}\theta_{T(2n_1,\ldots,2n_d)}(t)=\sum_{l=0}^d2^l(1+e^{-4t})^{-l}\sum_{1\leqslant i_1<\cdots<i_l\leqslant d}\theta_{L(n_{i_1},\ldots,n_{i_l})}^\ast(t).
\end{equation*}
By setting
\begin{equation*}
f(n_1,\ldots,n_d)=(1+e^{-4t})^{-d}\theta_{T(2n_1,\ldots,2n_d)}(t)\textnormal{ and }g(n_{i_1},\ldots,n_{i_l})=2^l(1+e^{-4t})^{-l}\theta_{L(n_{i_1},\ldots,n_{i_l})}^\ast(t)
\end{equation*}
in Lemma \ref{inversion}, it comes
\begin{equation*}
\theta_{L(n_{i_1},\ldots,n_{i_l})}^\ast(t)=2^{-l}\sum_{m=0}^l(-1-e^{-4t})^{l-m}\sum_{\substack{j_1<\cdots<j_m\\ \{j_q\}_{q=1}^m\subset\{i_q\}_{q=1}^l}}\theta_{T(2n_{j_1},\ldots,2n_{j_m})}(t).
\end{equation*}
From the above relation and relation (\ref{thetaLL*}), the theta function on the $d$-orthotope square lattice is expressed in terms of the theta function on the $d$-dimensional torus:
\begin{equation*}
\theta_{L(n_1,\ldots,n_d)}(t)=\sum_{l=0}^d\sum_{1\leqslant i_1<\cdots<i_l\leqslant d}\sum_{m=0}^l2^{-l}(-1-e^{-4t})^{l-m}\sum_{\substack{j_1<\cdots<j_m\\ \{j_q\}_{q=1}^m\subset\{i_q\}_{q=1}^l}}\theta_{T(2n_{j_1},\ldots,2n_{j_m})}(t).
\end{equation*}
Rewriting the double multi-index summation in one summation using (\ref{doublesumtoonesum}), we have
\begin{equation*}
\theta_{L(n_1,\ldots,n_d)}(t)=\sum_{l=0}^d\sum_{m=0}^lC^{l-m}_{d-m}2^{-l}(-1-e^{-4t})^{l-m}\sum_{1\leqslant i_1<\cdots<i_m\leqslant d}\theta_{T(2n_{i_1},\ldots,2n_{i_m})}(t)
\end{equation*}
Therefore the theta functions are related by
\begin{equation}
\label{thetaLT}
\theta_{L(n_1,\ldots,n_d)}(t)=\frac{1}{2^d}\sum_{m=0}^d(1-e^{-4t})^{d-m}\sum_{1\leqslant i_1<\cdots<i_m\leqslant d}\theta_{T(2n_{i_1},\ldots,2n_{i_m})}(t).
\end{equation}
\subsection{Preliminary calculation}
Let $\{\lambda_j^L\}_{j=0,\ldots,N-1}$ be the eigenvalues of the combinatorial Laplacian on the square lattice $L(n_1,\ldots,n_d)$. For small $t>0$, the theta function on the torus $\theta_{T(2n_{i_1},\ldots,2n_{i_m})}$ behaves as
\begin{equation}
\label{thetaT0}
\theta_{T(2n_{i_1},\ldots,2n_{i_m})}(t)-2^m\Big(\prod_{q=1}^mn_{i_q}\Big)e^{-2mt}I_0(2t)^m=O(t),\quad t\rightarrow0.
\end{equation}
We follow the method derived in \cite{chinta2010zeta}. From relation (\ref{thetaLT}) and the behaviour of the theta function at small $t>0$ (\ref{thetaT0}), we start by writing the theta function on the lattice $L(n_1,\ldots,n_d)$ as
\begin{align*}
\sum_{j\neq0}e^{-\lambda_j^Lt}&=\sum_{m=1}^d\left(\frac{1-e^{-4t}}{4}\right)^{d-m}e^{-2mt}I_0(2t)^mV_m^dn^m\\
&\ \ \ +\left[\theta_{L(n_1,\ldots,n_d)}(t)-\sum_{m=1}^d\left(\frac{1-e^{-4t}}{4}\right)^{d-m}e^{-2mt}I_0(2t)^mV_m^dn^m-1\right],
\end{align*}
to ensure the convergence of the integral of the Gauss transform that will appear below. By taking the Gauss transform of the above, that is, multiplying by $2se^{-s^2t}$ and then integrating with respect to $t$ from zero to infinity, we have
\begin{align}
\label{GaussT}
&\sum_{j\neq0}\frac{2s}{s^2+\lambda_j^L}=\sum_{m=1}^dV_m^dn^m2s\int_0^\infty e^{-s^2t}\left(\frac{1-e^{-4t}}{4}\right)^{d-m}e^{-2mt}I_0(2t)^mdt\nonumber\\
&+2s\left[\sum_{m=1}^d\int_0^\infty e^{-s^2t}(1-e^{-4t})^{d-m}\right.\nonumber\\
&\qquad\qquad\qquad\times\bigg(\frac{1}{2^d}\sum_{1\leqslant i_1<\cdots<i_m\leqslant d}\theta_{T(2n_{i_1},\ldots,2n_{i_m})}(t)-\frac{1}{4^{d-m}}V^d_mn^me^{-2mt}I_0(2t)^m-\frac{1}{2^d}C^m_d\bigg)dt\nonumber\\
&\left.\qquad+\int_0^\infty e^{-s^2t}((1-e^{-4t})^d-2^d+\sum_{m=1}^dC^m_d(1-e^{-4t})^{d-m})dt\right].
\end{align}
For $m\in\{1,\ldots,d\}$, define the functions $\mathcal{I}^d_m$ and $\mathcal{H}^d_{m,n}$ such that
\begin{equation}
\label{partialI}
\partial_s\mathcal{I}^d_m(s)=2s\int_0^\infty e^{-s^2t}\left(\frac{1-e^{-4t}}{4}\right)^{d-m}e^{-2mt}I_0(2t)^mdt
\end{equation}
and
\begin{align}
\label{partialH}
\partial_s\mathcal{H}^d_{m,n}(s)=2s&\int_0^\infty e^{-s^2t}(1-e^{-4t})^{d-m}\nonumber\\
&\times\bigg(\frac{1}{2^d}\sum_{1\leqslant i_1<\cdots<i_m\leqslant d}\theta_{T(2n_{i_1},\ldots,2n_{i_m})}(t)-\frac{1}{4^{d-m}}V^d_mn^me^{-2mt}I_0(2t)^m-\frac{1}{2^d}C^m_d\bigg)dt.
\end{align}
Equation (\ref{GaussT}) can then be written as
\begin{align}
\label{dsIO}
\sum_{j\neq0}\frac{2s}{s^2+\lambda_j^L}&=\sum_{m=1}^dV^d_mn^m\partial_s\mathcal{I}^d_m(s)+\sum_{m=1}^d\partial_s\mathcal{H}^d_{m,n}(s)+\frac{1}{2^d}2s\int_0^\infty e^{-s^2t}((2-e^{-4t})^d-2^d)dt.
\end{align}
By integrating over $s$ equations (\ref{partialI}) and (\ref{partialH}) we get
\begin{align*}
&\mathcal{I}^d_d(s)=\int_0^\infty(e^{-t}-e^{-(s^2+2d)t}I_0(2t)^d)\frac{dt}{t},\\
&\mathcal{H}^d_{d,n}(s)=-\frac{1}{2^d}\int_0^\infty(e^{-s^2t}(\theta_{T(2n_1,\ldots,2n_d)}(t)-V^d_d(2n)^de^{-2dt}I_0(2t)^d-1)+e^{-t})\frac{dt}{t}
\end{align*}
and for $m\neq d$,
\begin{align*}
&\mathcal{I}^d_m(s)=-\frac{1}{4^{d-m}}\int_0^\infty(1-e^{-4t})^{d-m}e^{-(s^2+2m)t}I_0(2t)^m\frac{dt}{t},\\
&\mathcal{H}^d_{m,n}(s)=-\int_0^\infty e^{-s^2t}(1-e^{-4t})^{d-m}\\
&\qquad\qquad\qquad\times\bigg(\frac{1}{2^d}\sum_{1\leqslant i_1<\cdots<i_m\leqslant d}\theta_{T(2n_{i_1,\ldots,2n_{i_m}})}(t)-\frac{1}{4^{d-m}}V^d_mn^me^{-2mt}I_0(2t)^m-\frac{1}{2^d}C^m_d\bigg)\frac{dt}{t}.
\end{align*}
By integrating equation (\ref{dsIO}) above we get
\begin{equation}
\sum_{j\neq0}\log(s^2+\lambda_j^L)=\sum_{m=1}^dV^d_mn^m\mathcal{I}^d_m(s)+\sum_{m=1}^d\mathcal{H}^d_{m,n}(s)+\sum_{k=1}^dC^k_d(-2)^{-k}\log(s^2+4k)+\textnormal{constant}.
\label{log(s2+lj)}
\end{equation}
The asymptotic behaviour of the functions $\mathcal{I}^d_m$ and $\mathcal{H}^d_{m,n}$ as $s\rightarrow\infty$ determine the constant of integration. We have
\begin{equation*}
\mathcal{I}^d_d(s)=2\log{s}+o(1)\textnormal{ and }\mathcal{I}^d_m(s)=o(1)\textnormal{ for }m\neq d\textnormal{ as }s\rightarrow\infty
\end{equation*}
and
\begin{equation*}
\mathcal{H}^d_{d,n}(s)=-\frac{1}{2^{d-1}}\log{s}+o(1)\textnormal{ and }\mathcal{H}^d_{m,n}(s)=o(1)\textnormal{ for }m\neq d\textnormal{ as }s\rightarrow\infty
\end{equation*}
and
\begin{equation*}
\sum_{j\neq0}\log(s^2+\lambda_j^L)=2(N-1)\log{s}+o(1)\textnormal{ as }s\rightarrow\infty.
\end{equation*}
Therefore, equation (\ref{log(s2+lj)}) as $s\rightarrow\infty$ yields
\begin{equation*}
2(N-1)\log{s}+o(1)=2N\log{s}-\frac{1}{2^{d-1}}\log{s}+\frac{1}{2^{d-1}}(1-2^d)\log{s}+\textnormal{constant}+o(1)
\end{equation*}
so that the constant is zero. Evaluating equation (\ref{log(s2+lj)}) in $s=0$ gives the logarithm of the product of the non-zero eigenvalues of the Laplacian on the $d$-dimensional square lattice $L(n_1,\ldots,n_d)$:
\begin{equation}
\log\Big(\prod_{j\neq0}\lambda_j^L\Big)=c_dV^d_dn^d+\sum_{m=1}^{d-1}V^d_m\mathcal{I}^d_m(0)n^m+\sum_{m=1}^d\mathcal{H}^d_{m,n}(0)+\sum_{k=1}^dC^k_d(-2)^{-k}\log(4k)
\label{logdets=0}
\end{equation}
where
\begin{equation*}
c_d=\int_0^\infty(e^{-t}-e^{-2dt}I_0(2t)^d)\frac{dt}{t}.
\end{equation*}
\subsection{Asymptotic expansion}
By expanding $(1-e^{-4t})^{d-m}=\sum_{k=0}^{d-m}C^k_{d-m}(-1)^ke^{-4kt}$ in $\mathcal{H}^d_{m,n}(0)$, it can be rewritten as
\begin{align*}
\mathcal{H}^d_{m,n}(0)&=-\frac{1}{2^d}\sum_{k=0}^{d-m}C^k_{d-m}(-1)^k\\
&\times\sum_{1\leqslant i_1<\cdots<i_m\leqslant d}\int_0^\infty\big(e^{-4kt}(\theta_{T(2n_{i_1},\ldots,2n_{i_m})}(t)-\prod_{q=1}^m(2\alpha_{i_q}ne^{-2t}I_0(2t))-1)+e^{-t}\big)\frac{dt}{t},
\end{align*}
for all $m=1,\ldots,d$, where the $e^{-t}$ term is added to make the integral converge. It can be added since $\sum_{k=0}^{d-m}C^k_{d-m}(-1)^k=0$ for $m=1,\ldots,d-1$. By splitting the sum over $k$ we have
\begin{align*}
&\sum_{m=1}^d\mathcal{H}^d_{m,n}(0)=\\
&-\frac{1}{2^d}\sum_{m=1}^d\sum_{1\leqslant i_1<\cdots<i_m\leqslant d}\int_0^\infty\big(\theta_{T(2n_{i_1},\ldots,2n_{i_m})}(t)-\prod_{q=1}^m(2\alpha_{i_q}ne^{-2t}I_0(2t))-1+e^{-t}\big)\frac{dt}{t}\\
&-\frac{1}{2^d}\sum_{m=1}^d\sum_{k=1}^{d-m}\sum_{1\leqslant i_1<\cdots<i_m\leqslant d}\int_0^\infty\big(e^{-4kt}(\theta_{T(2n_{i_1},\ldots,2n_{i_m})}(t)-\prod_{q=1}^m(2\alpha_{i_q}ne^{-2t}I_0(2t))-1)+e^{-t}\big)\frac{dt}{t}.
\end{align*}
From \cite[Theorem $5.8$]{chinta2010zeta}, the asymptotic behaviour as $n\rightarrow\infty$ of the $k=0$ term is given by
\begin{align*}
&-\frac{1}{2^d}\sum_{m=1}^d\sum_{1\leqslant i_1<\cdots<i_m\leqslant d}\int_0^\infty(\theta_{T(2n_{i_1},\ldots,2n_{i_m})}(t)-\prod_{q=1}^m(2\alpha_{i_q}ne^{-2t}I_0(2t))-1+e^{-t})\frac{dt}{t}\\
&=\frac{1}{2^d}\sum_{m=1}^d\sum_{1\leqslant i_1<\cdots<i_m\leqslant d}(2\log{n}-\zeta'_{\mathbb{R}^m/\textnormal{diag}(2\alpha_{i_1},\ldots,2\alpha_{i_m})\mathbb{Z}^m}(0))+o(1).
\end{align*}
After a change of variable $t\rightarrow n^2t$, the sum over the non-zero $k$'s can be splitted as
\begin{align}
&-\frac{1}{2^d}\sum_{k=1}^{d-1}\sum_{m=1}^{d-k}\sum_{1\leqslant i_1<\cdots<i_m\leqslant d}C^k_{d-m}(-1)^k\nonumber\\
&\times\left[\int_0^1e^{-4kn^2t}(\theta_{T(2n_{i_1},\ldots,2n_{i_m})}(n^2t)-\prod_{q=1}^m(2\alpha_{i_q}ne^{-2n^2t}I_0(2n^2t)))\frac{dt}{t}+\int_0^1(e^{-n^2t}-e^{-4kn^2t})\frac{dt}{t}\right.\nonumber\\
&\quad\quad+\int_1^\infty e^{-4kn^2t}(\theta_{T(2n_{i_1},\ldots,2n_{i_m})}(n^2t)-1)\frac{dt}{t}\nonumber\\
&\quad\quad+\left.\int_1^\infty(e^{-n^2t}-e^{-4kn^2t}\prod_{q=1}^m(2\alpha_{i_q}ne^{-2n^2t}I_0(2n^2t)))\frac{dt}{t}\right].
\label{Hk>0}
\end{align}
From the propositions in \cite[section $5$]{chinta2010zeta}, the first, third and fourth integrals tend to zero as $n\rightarrow\infty$. The second integral tends to
\begin{equation*}
\int_0^\infty(e^{-t}-e^{-4kt})\frac{dt}{t}=\log(4k).
\end{equation*}
The limit as $n\rightarrow\infty$ of (\ref{Hk>0}) is then
\begin{equation*}
-\frac{1}{2^d}\sum_{k=1}^{d-1}\sum_{m=1}^{d-k}C^m_dC^k_{d-m}(-1)^k\log(4k)=-\frac{1}{2^d}\sum_{k=1}^{d-1}C^k_d(2^{d-k}-1)(-1)^k\log(4k).
\end{equation*}
Therefore, the $\mathcal{H}^d_{m,n}(0)$ term together with the constant term of equation (\ref{logdets=0}) behave as
\begin{equation*}
\left(2-\frac{1}{2^{d-1}}\right)\log{n}-\frac{1}{2^d}\sum_{m=1}^d\sum_{1\leqslant i_1<\cdots<i_m\leqslant d}\zeta'_{\mathbb{R}^m/\textnormal{diag}(2\alpha_{i_1},\ldots,2\alpha_{i_m})\mathbb{Z}^m}(0)+\frac{1}{2^d}\sum_{k=1}^dC^k_d(-1)^k\log(4k)
\end{equation*}
as $n\rightarrow\infty$.\\
We will now express the derivative of the spectral zeta function on the $m$-dimensional real torus $\mathbb{R}^m/\textnormal{diag}(2\alpha_{i_1},\ldots,2\alpha_{i_m})\mathbb{Z}^m$, that is $\zeta'_{\mathbb{R}^m/\textnormal{diag}(2\alpha_{i_1},\ldots,2\alpha_{i_m})\mathbb{Z}^m}$, in terms of the derivative of the spectral zeta function on the $m$-dimensional orthotope of size $\alpha_{i_1}\times\cdots\times\alpha_{i_m}$. The eigenvalues of the Laplace-Beltrami operator with Dirichlet boundary conditions on the $m$-dimensional orthotope of size $\alpha_{i_1}\times\cdots\times\alpha_{i_m}$ are given by
\begin{equation*}
\lambda_k=\pi^2\sum_{q=1}^m\left(\frac{k_{i_q}}{\alpha_{i_q}}\right)^2\textnormal{ with }k=(k_{i_1},\ldots,k_{i_m})\in(\mathbb{N}^\ast)^m.
\end{equation*}
So that the spectral zeta function on the $m$-dimensional orthotope of size $\alpha_{i_1}\times\cdots\times\alpha_{i_m}$ with Dirichlet boundary conditions, denoted by $\zeta_{\alpha_{i_1}\times\cdots\times\alpha_{i_m}}$, is given by
\begin{equation*}
\zeta_{\alpha_{i_1}\times\cdots\times\alpha_{i_m}}(s)=\frac{1}{\pi^{2s}}\sum_{k_{i_1},\ldots,k_{i_m}\geqslant1}\left(\sum_{q=1}^m\left(\frac{k_{i_q}}{\alpha_{i_q}}\right)^2\right)^{-s}.
\end{equation*}
The spectral zeta function on the real torus $\mathbb{R}^m/\textnormal{diag}(2\alpha_{i_1},\ldots,2\alpha_{i_m})\mathbb{Z}^m$ is given by
\begin{equation*}
\zeta_{\mathbb{R}^m/\textnormal{diag}(2\alpha_{i_1},\ldots,2\alpha_{i_m})\mathbb{Z}^m}(s)=\frac{1}{(2\pi)^{2s}}\sum_{(k_{i_1},\ldots,k_{i_m})\in\mathbb{Z}^m\backslash\{0\}}\left(\sum_{q=1}^m\left(\frac{k_{i_q}}{2\alpha_{i_q}}\right)^2\right)^{-s}.
\end{equation*}
The spectral zeta functions are then related by
\begin{equation*}
\zeta_{\mathbb{R}^m/\textnormal{diag}(2\alpha_{i_1},\ldots,2\alpha_{i_m})\mathbb{Z}^m}(s)=\sum_{l=1}^m2^l\sum_{\substack{j_1<\cdots<j_l\\ \{j_q\}_{q=1}^l\subset\{i_q\}_{q=1}^m}}\zeta_{\alpha_{j_1}\times\cdots\times\alpha_{j_l}}(s).
\end{equation*}
Summing the above over all $i_q$, $q=1,\ldots,m$ and over all $m$, $m=1,\ldots,d$, gives
\begin{align*}
&\frac{1}{2^d}\sum_{m=1}^d\sum_{1\leqslant i_1<\cdots<i_m\leqslant d}\zeta_{\mathbb{R}^m/\textnormal{diag}(2\alpha_{i_1},\ldots,2\alpha_{i_m})\mathbb{Z}^m}(s)\\
&=\frac{1}{2^d}\sum_{m=1}^d\sum_{1\leqslant i_1<\cdots<i_m\leqslant d}\sum_{l=1}^m2^l\sum_{\substack{j_1<\cdots<j_l\\ \{j_q\}_{q=1}^l\subset\{i_q\}_{q=1}^m}}\zeta_{\alpha_{j_1}\times\cdots\times\alpha_{j_l}}(s)\\
&=\frac{1}{2^d}\sum_{m=1}^d\sum_{l=1}^mC^{m-l}_{d-l}2^l\sum_{1\leqslant i_1<\cdots<i_l\leqslant d}\zeta_{\alpha_{i_1}\times\cdots\times\alpha_{i_l}}(s)\\
&=\sum_{m=1}^d\sum_{1\leqslant i_1<\cdots<i_m\leqslant d}\zeta_{\alpha_{i_1}\times\cdots\times\alpha_{i_m}}(s)
\end{align*}
where in the last equality we exchanged the sums over $m$ and $l$ and used the fact that \linebreak$\sum_{m=l}^dC^{m-l}_{d-l}=2^{d-l}$. By expressing the derivative of the spectral zeta function evaluated in zero in terms of the regularized determinant of the Laplace-Beltrami operator on $m$-dimensional orthotopes, $m=1,\ldots,d$, with Dirichlet boundary conditions, we have
\begin{equation*}
\frac{1}{2^d}\sum_{m=1}^d\sum_{1\leqslant i_1<\cdots<i_m\leqslant d}\zeta'_{\mathbb{R}^m/\textnormal{diag}(2\alpha_{i_1},\ldots,2\alpha_{i_m})\mathbb{Z}^m}(0)=-\sum_{m=1}^d\sum_{1\leqslant i_1<\cdots<i_m\leqslant d}\log\textnormal{det}^\ast\Delta_{\alpha_{i_1}\times\cdots\times\alpha_{i_m}}.
\end{equation*}
Putting everything together gives the asymptotic behaviour of the determinant of the Laplacian on the $d$-dimensional square lattice $L(n_1,\ldots,n_d)$
\begin{align*}
\log\textnormal{det}^\ast\Delta_{L(\alpha_1n,\ldots,\alpha_dn)}&=c_dV^d_dn^d-\sum_{m=1}^{d-1}\frac{1}{4^{d-m}}\left(\int_0^\infty(1-e^{-4t})^{d-m}e^{-2mt}I_0(2t)^m\frac{dt}{t}\right)V^d_mn^m\\
&\ \ \ +(2-2^{1-d})\log{n}+\sum_{m=1}^d\sum_{1\leqslant i_1<\cdots<i_m\leqslant d}\log\textnormal{det}^\ast\Delta_{\alpha_{i_1}\times\cdots\times\alpha_{i_m}}\\
&\ \ \ +\frac{1}{2^d}\sum_{m=1}^dC^m_d(-1)^m\log(4m)+o(1)
\end{align*}
as $n\rightarrow\infty$. Notice that in the bulk limit the lead term is the same as in the case of the torus but lower order terms are deducted; for each $m$-dimensional face, $m=1,\ldots,d-1$, a term proportional to $V_m^dn^m$ is deducted. This can be explained by observing that in the torus case we have a periodic lattice while for the $d$-dimensional hypercubic lattice the periodicity is substituted by free boundary conditions. Spectral determinants of the limiting $d$-dimensional orthotope and each of its $m$-dimensional faces, $m=1,\ldots,d-1$, also appear. Moreover the last constant term is new in the development. The terms in $V_m^dn^m$ appearing from the boundary effect can be written in the following way:
\begin{align*}
-\int_0^\infty(1-e^{-4t})^{d-m}e^{-2mt}I_0(2t)^m\frac{dt}{t}&=\sum_{k=0}^{d-m}C^k_{d-m}(-1)^k\int_0^\infty(e^{-t}-e^{-(2k+m)t}I_0(t)^m)\frac{dt}{t}.
\end{align*}
These integrals are denoted by $J_m(2k+m)$ in \cite{MR2999720} by Glasser and are related to the Mahler measure of the hypercubic polynomial $P(x_1,\ldots,x_m)=4k+2m+\sum_{j=1}^m(x_j+x_j^{-1})$ by the relation $m(P)=\log{2}+J_m(2k+m)$. In \cite{MR2999720}, Glasser calculates these integrals in terms of hypergeometric functions for $m=2$ and $3$. For $m=1$, one explicity has
\begin{equation*}
\frac{1}{4^{d-1}}\int_0^\infty(1-e^{-4t})^{d-1}e^{-2t}I_0(2t)\frac{dt}{t}=\frac{1}{4^{d-1}}\sum_{k=1}^{d-1}C^k_{d-1}(-1)^k\log(2k+1+2\sqrt{k^2+k})
\end{equation*}
(see \cite[Proposition $2.4$]{louis2015asymptotics}).
\begin{example}
Consider the two dimensional rectangular grid $\alpha_1n\times\alpha_2n$. The volume of the limiting rectangle of size $\alpha_1\times\alpha_2$ is $V^2_2=\alpha_1\alpha_2$ with perimeter $V^2_1=2\alpha_1+2\alpha_2$. From Theorem \ref{ThO} it comes
\begin{align*}
\log\textnormal{det}^\ast\Delta_{L(\alpha_1n,\alpha_2n)}&=\frac{4G}{\pi}V^2_2n^2-\frac{1}{2}\log(1+\sqrt{2})V^2_1n+\frac{3}{2}\log{n}+\log\textnormal{det}^\ast\Delta_{\alpha_1\times\alpha_2}\\
&\ \ \ +\log\textnormal{det}^\ast\Delta_{\alpha_1}+\log\textnormal{det}^\ast\Delta_{\alpha_2}-\frac{1}{4}\log{2}+o(1)
\end{align*}
as $n\rightarrow\infty$, which is equivalent to the formula derived in \cite[section $4.2$]{MR952941}.
\end{example}
\subsection{\texorpdfstring{Asymptotic number of rooted \boldmath$2$- and \boldmath$3$-spanning forests}{Asymptotic number of 2 and 3-spanning forests}}
In this section, we derive asymptotics for $2$ and $3$ component rooted spanning forests in $d$-orthotope square lattices. Let $N^d_k$ denote the number of rooted $k$-spanning forests on $L(n_1,\ldots,n_d)$ which is given by the $k$-th power in $s^2$ of the characteristic polynomial:
\begin{equation*}
\prod_{j=0}^{\prod_{i=1}^dn_i-1}\left(\lambda_j^L+\frac{s^2}{n^2}\right).
\end{equation*}
Following \cite{MR952941}, by expanding the above in powers of $(s/n)^2$, one finds that $N^d_2$ and $N^d_3$ are related to $N^d_1$ by
\begin{equation*}
\frac{N^d_2}{N^d_1}=\sum_{j\neq0}\frac{1}{\lambda_j^L}\quad\textnormal{and}\quad\frac{N^d_3}{N^d_1}=\frac{1}{2}\left(\left(\frac{N^d_2}{N^d_1}\right)^2-\sum_{j\neq0}\frac{1}{(\lambda_j^L)^2}\right).
\end{equation*}
The number or rooted spanning trees $N^d_1$ is related to the number of unrooted spanning trees $\tau(L(n_1,\ldots,n_d))$ by the relation
\begin{equation*}
N^d_1=\Big(\prod_{i=1}^dn_i\Big)\tau(L(n_1,\ldots,n_d)).
\end{equation*}
Recall that from equation (\ref{log(s2+lj)}), we have that
\begin{equation*}
\sum_{j\neq0}\log((s/n)^2+\lambda_j^L)=\sum_{m=1}^dV^d_mn^m\mathcal{I}^d_m(s/n)+\sum_{m=1}^d\mathcal{H}^d_{m,n}(s/n)+\sum_{k=1}^dC^k_d(-2)^{-k}\log((s/n)^2+4k)
\end{equation*}
where
\begin{equation*}
\mathcal{I}^d_d(s/n)=\int_0^\infty(e^{-t}-e^{-((s/n)^2+2d)t}I_0(2t)^d)\frac{dt}{t}.
\end{equation*}
We have
\begin{equation*}
\lim_{n\rightarrow\infty}\mathcal{I}^d_d(s/n)=c_d.
\end{equation*}
For $d\geqslant3$,
\begin{equation*}
\lim_{n\rightarrow\infty}n^2(\mathcal{I}^d_d(s/n)-c_d)=\lim_{n\rightarrow\infty}n^2\int_0^\infty(1-e^{-(s/n)^2t})e^{-2dt}I_0(2t)^d\frac{dt}{t}=\frac{s^2}{2}W_d
\end{equation*}
where $W_d$ is the so-called Watson integral for the $d$-dimensional hypercubic lattice
\begin{equation*}
W_d=\int_0^\infty e^{-dt}I_0(t)^ddt.
\end{equation*}
In \cite{MR1862771}, Joyce and Zucker introduced the generalised lattice Green function defined by
\begin{equation*}
G_d(\mathbf{n};k,w)=\frac{1}{\Gamma(k)}\int_0^\infty t^{k-1}e^{-wt}\prod_{i=1}^dI_{n_i}(t)dt
\end{equation*}
where $\mathbf{n}=\{n_1,\ldots,n_d\}$ is a set of non-negative integers, $w\geqslant d$, $k>0$ and $\Gamma$ is the gamma function. Here the lattice Green function will only appear with $\mathbf{n}=0$, hence we denote it shortly by $G_d(k,w)$. In \cite{MR1862771}, numerical evaluations of the integrals $c_d$ and $W_d$ are computed and also in \cite{chinta2010zeta} for $c_d$ and in \cite{MR1965463} for $W_d$ and $G_d(1,w)$.\\
For $d\geqslant5$,
\begin{align*}
\lim_{n\rightarrow\infty}(n^4(\mathcal{I}^d_d(s/n)-c_d)-n^2s^2W_d)&=\lim_{n\rightarrow\infty}n^4\int_0^\infty\left(1-e^{-(s/n)^2t}-\frac{s^2}{n^2}t\right)e^{-2dt}I_0(2t)^d\frac{dt}{t}\\
&\ \ \ =-\frac{s^4}{8}G_d(2,d).
\end{align*}
Continuing in this way, we arrive at the following expansion for $\mathcal{I}^d_d(s/n)$ as $n\rightarrow\infty$
\begin{equation}
n^d\mathcal{I}^d_d(s/n)=c_dn^d+\sum_{k=1}^{\lfloor(d-1)/2\rfloor}(-1)^{k+1}n^{d-2k}\frac{s^{2k}}{k2^k}G_d(k,d)+o(n).
\label{Id(s/n)}
\end{equation}
Recall that for $m\in\{1,\ldots,d-1\}$,
\begin{equation*}
\mathcal{I}^d_m(s/n)=-\frac{1}{4^{d-m}}\int_0^\infty(1-e^{-4t})^{d-m}e^{-((s/n)^2+2m)t}I_0(2t)^m\frac{dt}{t}.
\end{equation*}
We have
\begin{equation*}
\lim_{n\rightarrow\infty}\mathcal{I}^d_m(s/n)=\mathcal{I}^d_m(0).
\end{equation*}
Similarly as for $m=d$, we obtain as $n\rightarrow\infty$
\begin{align}
n^m\mathcal{I}^d_m(s/n)&=\mathcal{I}^d_m(0)n^m\nonumber\\
&+\frac{1}{4^{d-m}}\sum_{k=1}^{\lfloor(m-1)/2\rfloor}(-1)^kn^{m-2k}\frac{s^{2k}}{k!2^k}\int_0^\infty(1-e^{-2t})^{d-m}e^{-mt}I_0(t)^mt^{k-1}dt+o(n).
\label{Im(s/n)}
\end{align}
The above integral can be expressed in terms of the generalised lattice Green function:
\begin{equation*}
\frac{1}{(k-1)!}\int_0^\infty(1-e^{-2t})^{d-m}e^{-mt}I_0(t)^mt^{k-1}dt=\sum_{l=0}^{d-m}C^l_{d-m}(-1)^lG_m(k,m+2l).
\end{equation*}
Putting equations (\ref{Id(s/n)}) and (\ref{Im(s/n)}) together gives the expansion for $d\geqslant3$
\begin{align*}
\sum_{j\neq0}\log(&(s/n)^2+\lambda_j^L)=V_d^dc_dn^d+\sum_{m=1}^{d-1}V_m^d\mathcal{I}^d_m(0)n^m+\sum_{k=1}^{\lfloor(d-1)/2\rfloor}\frac{s^{2k}}{k2^k}\bigg((-1)^{k+1}V^d_dG_d(k,d)n^{d-2k}\\
&+\delta_{d\geqslant4}\frac{(-1)^k}{(k-1)!}\sum_{m=2k+1}^{d-1}V^d_mn^{m-2k}\frac{1}{4^{d-m}}\int_0^\infty(1-e^{-2t})^{d-m}e^{-mt}I_0(t)^mt^{k-1}dt\bigg)+o(n)
\end{align*}
 where $\delta_{d\geqslant d_0}=1$ if $d\geqslant d_0$ and $0$ otherwise.\\
For $d=3$,
\begin{align*}
\sum_{j\neq0}\log((s/n)^2&+\lambda_j^L)=V_3^3c_3n^d+V_1^3\mathcal{I}^3_1(0)n+V_2^3\mathcal{I}^3_2(0)n^2+\frac{s^2}{2}V^3_3W_3n+o(n)
\end{align*}
with the special values ($W_3$ is given in \cite{MR1862771})
\begin{equation*}
\mathcal{I}^3_1(0)=\frac{1}{16}\log((17+2\sqrt{2})(5-2\sqrt{6}))\quad\textnormal{and}\quad W_3=\frac{1}{96\pi^3}(\sqrt{3}-1)\left(\Gamma(1/24)\Gamma(11/24)\right)^2.
\end{equation*}
For $d\geqslant4$,
\begin{align*}
&\sum_{j\neq0}\log((s/n)^2+\lambda_j^L)=V^d_dc_dn^d+\sum_{m=1}^{d-1}V^d_m\mathcal{I}^d_m(0)n^m\\
&+\frac{s^2}{2}\left(V^d_dW_dn^{d-2}-\sum_{m=3}^{d-1}V^d_mn^{m-2}\frac{1}{4^{d-m}}\int_0^\infty(1-e^{-2t})^{d-m}e^{-mt}I_0(t)^mdt\right)\\
&+\frac{s^4}{8}\left(-\delta_{d\geqslant5}V^d_dG_d(2,d)n^{d-4}+\delta_{d\geqslant6}\sum_{m=5}^{d-1}V_m^dn^{m-4}\frac{1}{4^{d-m}}\int_0^\infty t(1-e^{-2t})^{d-m}e^{-mt}I_0(t)^mdt\right)\\
&+(\textnormal{terms in }s^k\textnormal{ with }k\geqslant3)+o(n)
\end{align*}
as $n\rightarrow\infty$.\\
On the other hand, the formal expansion of $\sum_{j\neq0}\log((s/n)^2+\lambda_j^L)$ gives
\begin{equation*}
\sum_{j\neq0}\log((s/n)^2+\lambda_j^L)=\log\bigg(\prod_{j\neq0}\lambda_j^L\bigg)+\sum_{p\geqslant1}\frac{(-1)^{p-1}}{p}\left(\frac{s}{n}\right)^{2p}\sum_{j\neq0}\frac{1}{(\lambda_j^L)^p}.
\end{equation*}
By identification of the terms in $s^2$, we find the asymptotic number of rooted $2$-spanning forests for $d=3$, as $n\rightarrow\infty$
\begin{equation*}
N^3_2=\left(\frac{V^3_3}{2}W_3n^3+o(n^3)\right)N^3_1
\end{equation*}
and for $d\geqslant4$, we have
\begin{equation*}
\sum_{j\neq0}\frac{1}{\lambda_j^L}=\frac{V^d_d}{2}W_dn^d-\sum_{m=3}^{d-1}\frac{V^d_m}{2}n^m\frac{1}{4^{d-m}}\int_0^\infty(1-e^{-2t})^{d-m}e^{-mt}I_0(t)^mdt+o(n^3)
\end{equation*}
so that
\begin{equation*}
N^d_2=\left(\frac{V^d_d}{2}W_dn^d-\sum_{m=3}^{d-1}\frac{V^d_m}{2}n^m\frac{1}{4^{d-m}}\int_0^\infty(1-e^{-2t})^{d-m}e^{-mt}I_0(t)^mdt+o(n^3)\right)N^d_1
\end{equation*}
where $N^d_1$ is asymptotically given by Theorem \ref{ThO}. By identification of the terms in $s^4$, we find that as $n\rightarrow\infty$
\begin{equation*}
\sum_{j\neq0}\frac{1}{(\lambda_j^L)^2}=O(n^d)
\end{equation*}
so that the asymptotic number of rooted $3$-spanning forests for $d=4$ is given by
\begin{equation*}
N^4_3=\left(\frac{(V^4_4W_4)^2}{8}n^8-\frac{V^4_3V^4_4}{8}W_4n^7\int_0^\infty(1-e^{-2t})e^{-3t}I_0(t)^3dt+o(n^7)\right)N^4_1,\textnormal{ as }n\rightarrow\infty.
\end{equation*}
\begin{remark}
It would be interesting to find the next terms in the development. For the $2$-dimensional case, we would need to find the asymptotic development as $n\rightarrow\infty$ of the following integral
\begin{equation*}
\int_0^\infty n^2(1-e^{-(s/n)^2t})e^{-4t}I_0(2t)^2\frac{dt}{t}.
\end{equation*}
In \cite{MR952941}, the authors computed the asymptotic development of $\sum_{j\neq0}\log((s/n)^2+\lambda_j)$ in the case of the torus with other techniques. To generalise their result to higher dimensions with our techniques, for example in the $3$-dimensional case, we would need to find the asymptotic development of
\begin{equation*}
\int_0^\infty(n^3(1-e^{-(s/n)^2t})-ns^2t)e^{-6t}I_0(2t)^3\frac{dt}{t}
\end{equation*}
as $n\rightarrow\infty$. This would enable us to derive asymptotics for the number of rooted $k$-spanning forests with $k\geqslant4$.
\end{remark}
\subsection{Spanning trees in two-dimensional square lattices}
In the two-dimensional case, one can derive an exact relation between the number of spanning trees on the rectangular square lattice $n_1\times n_2$ and the one on the torus of size $2n_1\times2n_2$. The product of the non-zero eigenvalues on $T(2n_1,2n_2)$ is given by
\begin{equation*}
\textnormal{det}^\ast\Delta_{T(2n_1,2n_2)}=\prod_{\substack{k_1=0\\ \quad\quad\mathclap{(k_1,k_2)\neq0}}}^{2n_1-1}\prod_{k_2=0}^{2n_2-1}(4-2\cos(\pi k_1/n_1)-2\cos(\pi k_2/n_2)).
\end{equation*}
The product over $k_1$, $k_2$ is a disjoint union of products over four squares of size $n_1\times n_2$. We split this product as a product over the $0$, $1$ and $2$ dimensional faces of the squares. It comes
\begin{align}
\textnormal{det}^\ast\Delta_{T(2n_1,2n_2)}&=4^28\prod_{k_1=1}^{n_1-1}(2-2\cos(\pi k_1/n_1))^2\prod_{k_2=1}^{n_2-1}(2-2\cos(\pi k_2/n_2))^2\nonumber\\
&\ \ \ \times\prod_{k_1=1}^{n_1-1}(6-2\cos(\pi k_1/n_1))^2\prod_{k_2=1}^{n_2-1}(6-2\cos(\pi k_2/n_2))^2\nonumber\\
&\ \ \ \times\prod_{k_1=1}^{n_1-1}\prod_{k_2=1}^{n_2-1}(4-2\cos(\pi k_1/n_1)-2\cos(\pi k_2/n_2))^4.
\label{prodT(2n1,2n2)}
\end{align}
On the other hand, the product of the non-zero eigenvalues on the square lattice $L(n_1,n_2)$ is given by
\begin{equation*}
\textnormal{det}^\ast\Delta_{L(n_1,n_2)}=\prod_{\substack{k_1=0\\ \quad\quad\mathclap{(k_1,k_2)\neq0}}}^{n_1-1}\prod_{k_2=0}^{n_2-1}(4-2\cos(\pi k_1/n_1)-2\cos(\pi k_2/n_2)).
\end{equation*}
By splitting the above product as a product when $k_1=0$, then $k_2=0$, then $1\leqslant k_1\leqslant n_1-1$, $1\leqslant k_2\leqslant n_2-1$, we get
\begin{align}
\textnormal{det}^\ast\Delta_{L(n_1,n_2)}&=\prod_{k_1=1}^{n_1-1}(2-2\cos(\pi k_1/n_1))\prod_{k_2=1}^{n_2-1}(2-2\cos(\pi k_2/n_2))\nonumber\\
&\ \ \ \times\prod_{k_1=1}^{n_1-1}\prod_{k_2=1}^{n_2-1}(4-2\cos(\pi k_1/n_1)-2\cos(\pi k_2/n_2)).
\label{prodL(n1,n2)}
\end{align}
Using the matrix tree theorem and putting the following identities coming from relations for Chebyshev polynomials of the second kind
\begin{equation*}
\prod_{k=1}^{n-1}(2-2\cos(\pi k/n))=n\textnormal{ and }\prod_{k=1}^{n-1}(6-2\cos(\pi k/n))=\frac{(3+2\sqrt{2})^n-(3-2\sqrt{2})^n}{4\sqrt{2}},
\end{equation*}
in equations (\ref{prodT(2n1,2n2)}) and (\ref{prodL(n1,n2)}), it follows that
\begin{equation*}
\tau(L(n_1,n_2))=\frac{2^{5/4}\tau(T(2n_1,2n_2))^{1/4}}{(n_1n_2)^{1/4}((3+2\sqrt{2})^{n_1}-(3-2\sqrt{2})^{n_1})^{1/2}((3+2\sqrt{2})^{n_2}-(3-2\sqrt{2})^{n_2})^{1/2}}.
\end{equation*}
\begin{remark}
It would be interesting to see if one could generalise the above relation to higher dimensions. It could not be done in the same way as it is done above. More precisely, when splitting the product in $3$ dimensions as a product over $0$, $1$, $2$ and $3$ dimensional faces, one would need for example to evaluate the following product
\begin{equation*}
\prod_{k_1=1}^{n_1-1}\prod_{k_2=1}^{n_2-1}(8-2\cos(\pi k_1/n_1)-2\cos(\pi k_2/n_2))
\end{equation*}
appearing for the $2$-dimensional face defined by $k_3=n_3$ and $k_1=1,\ldots,n_1-1$, $k_2=1,\ldots,n_2-1$.
\end{remark}
\section{Asymptotic number of spanning trees in the quartered Aztec diamond}
\subsection{A relation between the number of spanning trees on the quartered Aztec diamond and on the square lattice}
In \cite{MR2393253,MR1756162}, the authors showed that the number of spanning trees in the quartered Aztec diamond of side length $n$ is given by
\begin{equation*}
\tau(\textnormal{QAD}_n)=\prod_{0<k_1<k_2<n}(4-2\cos(\pi k_1/n)-2\cos(\pi k_2/n)).
\end{equation*}
The product of the non-zero eigenvalues on the square grid of side $n$ is given by
\begin{equation*}
\textnormal{det}^\ast\Delta_{L(n,n)}=\prod_{\substack{k_1=0\\ \quad\quad\mathclap{(k_1,k_2)\neq0}}}^{n-1}\prod_{k_2=0}^{n-1}(4-2\cos(\pi k_1/n)-2\cos(\pi k_2/n)).
\end{equation*}
By splitting this product as a product when $k_1=0$, then $k_2=0$, then $k_1=k_2$, $k_1=1,\ldots,n-1$, then $k_1<k_2$ and $k_2<k_1$, we have
\begin{align*}
\textnormal{det}^\ast\Delta_{L(n,n)}&=\prod_{k=1}^{n-1}(2-2\cos(\pi k/n))^2\prod_{k=1}^{n-1}(4-4\cos(\pi k/n))\\
&\ \ \ \times\prod_{1\leqslant k_1<k_2\leqslant n-1}(4-2\cos(\pi k_1/n)-2\cos(\pi k_2/n))^2.
\end{align*}
From the matrix tree theorem, it follows that
\begin{equation}
\label{tau(QAD)}
\tau(\textnormal{QAD}_n)=\frac{\tau(L(n,n))^{1/2}}{\sqrt{n}2^{(n-1)/2}}.
\end{equation}
\subsection{Asymptotic expansion}
From (\ref{tau(QAD)}), we have
\begin{equation}
\label{log(tau(QAD))}
\log(\tau(QAD_n))=\frac{1}{2}\log\textnormal{det}^\ast\Delta_{L(n,n)}-\frac{n}{2}\log{2}-\frac{3}{2}\log{n}+\frac{1}{2}\log{2}
\end{equation}
where the asymptotic behaviour of $\log\textnormal{det}^\ast\Delta_{L(n,n)}$ is given by
\begin{equation}
\label{L(n,n)}
\log\textnormal{det}^\ast\Delta_{L(n,n)}=\frac{4G}{\pi}n^2-2\log(1+\sqrt{2})n+\frac{3}{2}\log{n}-\zeta'_{1\times1}(0)-2\zeta'_{1}(0)-\frac{1}{4}\log{2}+o(1)
\end{equation}
as $n\rightarrow\infty$.\\
Consider the right-angled isoscele triangle with the sides of same length equal to $1$. The eigenvalues of the Laplace-Beltrami operator with Dirichlet boundary conditions are given by
\begin{equation*}
\lambda_k=\pi^2(k_1^2+k_2^2),\quad\textnormal{with }k=(k_1,k_2)\in(\mathbb{N}^\ast)^2\textnormal{ and }k_1>k_2.
\end{equation*}
The associated spectral zeta function with Dirichlet boundary conditions, denoted by $\zeta_\Delta$, is then given by
\begin{equation*}
\zeta_{\Delta}(s)=\frac{1}{\pi^{2s}}\sum_{1\leqslant k_2<k_1}\frac{1}{(k_1^2+k_2^2)^s}.
\end{equation*}
The spectral zeta function on the unit square with Dirichlet boundary conditions is given by
\begin{equation*}
\zeta_{1\times1}(s)=\frac{1}{\pi^{2s}}\sum_{k_1,k_2\geqslant1}\frac{1}{(k_1^2+k_2^2)^s}=2\zeta_{\Delta}(s)+2^{-s}\zeta_{1}(s).
\end{equation*}
The spectral zeta function on the unit interval with Dirichlet boundary conditions is related to the Riemann zeta function by $\zeta_1(s)=(2/\pi^{2s})\zeta(2s)$ with special values in $0$, $\zeta(0)=-1/2$ and $\zeta'(0)=-(1/2)\log(2\pi)$. Thus we have that $\zeta'_1(0)=-2\log{2}$. By differentiating the above and evaluating in $s=0$, we get
\begin{equation}
\label{zeta11}
\zeta'_{1\times1}(0)=2\zeta'_{\Delta}(0)-\log{2}.
\end{equation}
Putting (\ref{log(tau(QAD))}), (\ref{L(n,n)}), (\ref{zeta11}) together and writing the derivative of the spectral zeta function in $0$ in terms of the regularized determinant of the Laplace-Beltrami operator on the right-angled isoscele unit triangle with Dirichlet boundary conditions, that is
\begin{equation*}
\zeta'_{\Delta}(0)=-\log{\textnormal{det}^\ast\Delta_{\Delta}},
\end{equation*}
gives the asymptotic behaviour of the number of spanning trees on the quartered Aztec diamond, namely
\begin{equation*}
\log(\tau(\textnormal{QAD}_n))=\frac{2G}{\pi}n^2-\log(2+\sqrt{2})n-\frac{3}{4}\log{n}+\log{\textnormal{det}^\ast\Delta_{\Delta}}+\frac{23}{8}\log{2}+o(1)
\end{equation*}
as $n\rightarrow\infty$.
\par\vspace{\baselineskip}
\noindent
\textbf{Acknowledgements:} The author thanks Anders Karlsson for suggesting this problem to her and for valuable discussions. She also thanks Larry Glasser for pointing reference \cite{MR2999720} to her. The author acknowledges support from the Swiss NSF grant $200021\_132528/1$.

\nocite{*}
\bibliographystyle{plain}
\bibliography{bibliography}

\begin{thebibliography}{10}

\bibitem{MR1271140}
Norman Biggs.
\newblock {\em Algebraic graph theory}.
\newblock Cambridge Mathematical Library. Cambridge University Press,
  Cambridge, second edition, 1993.

\bibitem{chinta2010zeta}
Gautam Chinta, Jay Jorgenson, and Anders Karlsson.
\newblock Zeta functions, heat kernels, and spectral asymptotics on
  degenerating families of discrete tori.
\newblock {\em Nagoya Math. J.}, 198:121--172, 2010.

\bibitem{chinta2011complexity}
Gautam Chinta, Jay Jorgenson, and Anders Karlsson.
\newblock Complexity and heights of tori.
\newblock In {\em Dynamical systems and group actions}, volume 567 of {\em
  Contemp. Math.}, pages 89--98. Amer. Math. Soc., Providence, RI, 2012.

\bibitem{MR2393253}
Mihai Ciucu.
\newblock Symmetry classes of spanning trees of {A}ztec diamonds and perfect
  matchings of odd squares with a unit hole.
\newblock {\em J. Algebraic Combin.}, 27(4):493--538, 2008.

\bibitem{MR952941}
Bertrand Duplantier and Fran{\c{c}}ois David.
\newblock Exact partition functions and correlation functions of multiple
  {H}amiltonian walks on the {M}anhattan lattice.
\newblock {\em J. Statist. Phys.}, 51(3-4):327--434, 1988.

\bibitem{MR2999720}
M.~L. Glasser.
\newblock A note on a hyper-cubic {M}ahler measure and associated {B}essel
  integral.
\newblock {\em J. Phys. A}, 45(49):494002, 4, 2012.

\bibitem{MR1965463}
G.~S. Joyce.
\newblock Singular behaviour of the lattice {G}reen function for the
  {$d$}-dimensional hypercubic lattice.
\newblock {\em J. Phys. A}, 36(4):911--921, 2003.

\bibitem{MR1862771}
G.~S. Joyce and I.~J. Zucker.
\newblock Evaluation of the {W}atson integral and associated logarithmic
  integral for the {$d$}-dimensional hypercubic lattice.
\newblock {\em J. Phys. A}, 34(36):7349--7354, 2001.

\bibitem{MR1819995}
Richard Kenyon.
\newblock The asymptotic determinant of the discrete {L}aplacian.
\newblock {\em Acta Math.}, 185(2):239--286, 2000.

\bibitem{MR1756162}
Richard~W. Kenyon, James~G. Propp, and David~B. Wilson.
\newblock Trees and matchings.
\newblock {\em Electron. J. Combin.}, 7:Research Paper 25, 34 pp. (electronic),
  2000.

\bibitem{louis2015asymptotics}
Justine Louis.
\newblock Asymptotics for the number of spanning trees in circulant graphs and
  degenerating $d$-dimensional discrete tori.
\newblock {\em Annals of Combinatorics}, pages 1--31, 2015.

\bibitem{louis2015formula}
Justine Louis.
\newblock A formula for the number of spanning trees in circulant graphs with
  non-fixed generators and discrete tori.
\newblock {\em arXiv preprint arXiv:1312.4389, accepted in Bulletin of the
  Australian Mathematical Society}, 2015.

\bibitem{MR3312509}
Nicolai Reshetikhin and Boris Vertman.
\newblock Combinatorial quantum field theory and gluing formula for
  determinants.
\newblock {\em Lett. Math. Phys.}, 105(3):309--340, 2015.

\end{thebibliography}

\end{document}